\newtheorem{theorem}{Theorem}[section]
\newtheorem{lemma}[theorem]{Lemma}
\newtheorem{proposition}[theorem]{Proposition}
\theoremstyle{definition}
\newtheorem{definition}[theorem]{Definition}
\newtheorem*{remark}{Remark}
\newtheorem{example}[theorem]{Example}
\newcommand{\ep}{\varepsilon}
\newcommand{\RR}{\mathbb{R}}
\newcommand{\NN}{\mathbb{N}}
\newcommand{\ZZ}{\mathbb{Z}}
\newcommand{\cF}{\mathcal{F}}
\newcommand{\cN}{\mathcal{N}}
\newcommand{\cm}{\mathcal{M}}
\newcommand{\sss}{\mathscr{S}}
\newcommand{\sC}{\mathscr{C}}
\newcommand{\sg}{\mathscr{G}}
\newcommand{\al}{\alpha}
\title[Unique Ergodicity and the Approximate Product Property]
{Unique ergodicity for zero-entropy dynamical systems with the approximate product property}
\author[Peng Sun]{}
\subjclass[2010]{37B05, 37B40, 37C40, 37C50, 37E05.}
\keywords{approximate product property, unique ergodicity, topological entropy, ergodic measure, minimality, specification, gluing orbit, interval map, periodic points.  }
\email{sunpeng@cufe.edu.cn}
\tikzset{
	>=stealth',
	punktchain/.style={
		rectangle, 
		rounded corners, 
		draw=black, very thick,
		text width=9em, 
		minimum height=3em, 
		text centered, 
		on chain},
	line/.style={draw, thick, <-},
	element/.style={
		tape,
		top color=white,
		bottom color=blue!50!black!60!,
		minimum width=5em,
		draw=blue!40!black!90, very thick,
		text width=9em, 
		minimum height=3em, 
		text centered, 
		on chain},
	every join/.style={->, thick,shorten >=1pt},
	decoration={brace},
	tuborg/.style={decorate},
	tubnode/.style={midway, right=2pt},
}
\begin{document}
	
	\maketitle\ 
	
	
	\centerline{\scshape Peng Sun}
	\medskip
	{\footnotesize
		\centerline{China Economics and Management Academy}
		\centerline{Central University of Finance and Economics}
		\centerline{Beijing 100081, China}
	} 

	\bigskip

	\begin{abstract}
We show that for every topological dynamical system with the approximate product property,
zero topological entropy is equivalent to unique ergodicity.
Equivalence of minimality is also proved under a slightly stronger condition.
Moreover, we show that unique ergodicity implies the approximate product property if the system has periodic points.
	\end{abstract}
	


\section{Introduction}

It has been a historic question how the properties of a dynamical system are
determined by its topological entropy. 
Usually we tend to regard that a system
with zero topological entropy is considerably simple, while positive
topological entropy often comes with a rich structure.
Such facts can be shown for systems in certain classes. For example, in the seminal work
\cite{Ka80} Katok showed that for $C^{1+\alpha}$ diffeomorphisms on a surface,
positive topological entropy is equivalent to the existence of horseshoes.
On the contrary, there are zero-entropy systems that are in some sense complicated,
as well as positive-entropy systems that are in some sense simple. 
The question raised by Parry whether strict ergodicity implies
zero topological entropy has a negative answer in the most general setting. 
See Section \ref{seceg} for examples with more details. 

As the theory of hyperbolicity had been developed, Herman expected
a positive answer to Parry's question in the smooth case.
Katok then suggested a more ambitious conjecture that every $C^2$ diffeomorphism has
ergodic measures of arbitrary intermediate entropies, i.e. for each $\al\in[0,h(f))$,
where $h(f)$ denotes the topological entropy of the system $(X,f)$,
there is an ergodic measure $\mu_\al$ 
such that its metric entropy satisfies
$h_{\mu_\al}(f)=\al$. Partial results on Katok's conjecture
have been obtained in 
\cite{GSW, Ka80, KKK, QS, Sun09, Sun10, Sun12, Sun20, Sunze, Sunintent, Sunes, Ures}.
The 
success of specification-like
properties that played pivotal roles in 
\cite{GSW, QS, Sun20, Sunintent}
urges us to
consider classes of systems with such topological properties, which 
are closely related to some sort of hyperbolicity.

Specification-like properties are weak variations of the specification property
introduced by Bowen \cite{Bowen}. 
Since his pioneering works,
plenty of interesting results
have been obtained through various specification-like properties.
Among these properties, the approximate product property
introduced by Pfister and Sullivan \cite{PfSu} is almost the weakest one.
In \cite{Sunintent}, we have verified
Katok's conjecture for every system $(X,f)$ that
satisfies the approximate product property and asymptotically entropy expansiveness.
In fact, we
obtained a much stronger result, a description of the subtle structure of $\cm(X,f)$, the space of 
invariant measures, concerning their metric entropies. In particular, such a system $(X,f)$ 
must have zero topological entropy if it
is uniquely ergodic.

\begin{theorem}[{\cite[Theorem 1.1]{Sunintent}}]
	\label{alresidual}
	Let $(X,f)$ be an asymptotically entropy expansive system with the
	approximate	product property. Then $(X,f)$ has \emph{the generic structure of
		metric entropies}, i.e.
	for every $\al\in[0, h(f))$,
	$$\cm_e(X,f,\al):=\left\{\mu\in\cm(X,f): \text{ $\mu$ is ergodic and }h_\mu(f)=\alpha\right\}$$
	is a residual (dense $G_\delta$) subset in the compact metric subspace
	$$
	\cm^\al(X,f):=\{\mu\in\cm(X,f): h_\mu(f)\ge\alpha\}.$$
\end{theorem}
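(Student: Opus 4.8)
The plan is to deduce the statement from the Baire category theorem, using asymptotic entropy expansiveness to control the \emph{topology} of $\cm^\al(X,f)$ and the approximate product property to control its \emph{measure-theoretic richness}.

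\emph{Step 1: topological preliminaries and reduction.} Recall that $\cm(X,f)$ is compact and metrizable in the weak$^{*}$ topology. Since $(X,f)$ is asymptotically entropy expansive, a classical theorem of Misiurewicz shows that $\mu\mapsto h_\mu(f)$ is upper semi-continuous. Hence $\cm^\al(X,f)=\{\mu:h_\mu(f)\ge\al\}$ is closed, so it is a compact metrizable space and in particular a Baire space; it is nonempty because $\al<h(f)$ forces, by the variational principle, the existence of ergodic measures of entropy $>\al$. Upper semi-continuity also gives
$$\cm_e(X,f,\al)=\cm_e(X,f)\cap\bigcap_{n\ge 1}\{\mu\in\cm^\al(X,f):h_\mu(f)<\al+1/n\},$$
and since the set $\cm_e(X,f)$ of ergodic measures is a $G_\delta$ in $\cm(X,f)$, the set $\cm_e(X,f,\al)$ is a $G_\delta$ subset of the Baire space $\cm^\al(X,f)$. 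It therefore remains to prove that it is dense, and for this it suffices (by the Baire category theorem) to prove separately that (a) $\cm_e(X,f)\cap\cm^\al(X,f)$ is dense in $\cm^\al(X,f)$ and (b) $\{\mu\in\cm^\al(X,f):h_\mu(f)=\al\}$ is dense in $\cm^\al(X,f)$.

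\emph{Step 2: density of ergodic measures (a).} The consequence of the approximate product property I would invoke here is the entropy density of ergodic measures of Pfister--Sullivan \cite{PfSu}: for every $\mu\in\cm(X,f)$, every neighbourhood $U$ of $\mu$ and every $\eta>0$ there is an ergodic $\nu\in U$ with $h_\nu(f)>h_\mu(f)-\eta$. Given $\mu\in\cm^\al(X,f)$ and an open convex neighbourhood $U$, fix an ergodic $\omega$ with $h_\omega(f)>\al$ and put $\mu_t=(1-t)\mu+t\omega$; then $\mu_t\to\mu$ as $t\to 0^{+}$, so $\mu_t\in U$ for small $t$, while by affinity of the entropy $h_{\mu_t}(f)=(1-t)h_\mu(f)+t\,h_\omega(f)>\al$. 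Applying entropy density to such a $\mu_t$ with $\eta<h_{\mu_t}(f)-\al$ produces an ergodic $\nu\in U$ with $h_\nu(f)>\al$, that is, $\nu\in U\cap\cm_e(X,f)\cap\cm^\al(X,f)$.

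\emph{Step 3: the entropy-reduction construction (the main obstacle).} For (b), fix $\mu\in\cm^\al(X,f)$ and a convex open neighbourhood $U$; by Step 2 there is an ergodic $\mu_1\in U$ with $h_{\mu_1}(f)>\al$. The crucial — and hardest — ingredient is an entropy-reduction construction from the approximate product property: choose, via the standard correspondence between generic orbit segments and invariant measures, a $\mu_1$-generic $(n,\ep)$-separated set of cardinality $\approx e^{n h_{\mu_1}(f)}$, retain only a prescribed subset of cardinality $\approx e^{n\beta}$ with $\beta<\al$ fixed, and use the approximate product property to concatenate the corresponding blocks, in all admissible orders, into long orbits separated by gaps of vanishing relative length. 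Any weak$^{*}$ limit $\omega_2$ of empirical measures along such orbits is $f$-invariant (one does not need $\omega_2$ ergodic, which simplifies matters), lies in $U$ for suitable choice of $n$, $\ep$, the error fraction and the gap lengths, and satisfies $h_{\omega_2}(f)\le\beta<\al$. This last estimate is exactly where asymptotic entropy expansiveness is indispensable: it forces the metric entropy of $\omega_2$ to be governed by the combinatorial entropy $\beta$ of the coding, with no additional entropy concealed below the scale $\ep$. The delicate point is to keep all of these requirements — weak$^{*}$ proximity to $\mu_1$, the upper bound $h_{\omega_2}(f)\le\beta$, and compatibility of the approximate-product parameters with a prescribed target — simultaneously satisfiable; the matching lower bound $h_{\omega_2}(f)\gtrsim\beta$ is a routine maximal-separated-set count and is not needed below.

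\emph{Step 4: density of the level set (b), and conclusion.} Assume first $\al>0$. With $\mu_1,\omega_2\in U$ as above and $h_{\mu_1}(f)>\al>h_{\omega_2}(f)$, affinity of the entropy along the segment $t\mapsto(1-t)\mu_1+t\omega_2$ and the intermediate value theorem give $t^{*}\in(0,1)$ with $h_{(1-t^{*})\mu_1+t^{*}\omega_2}(f)=\al$, and $(1-t^{*})\mu_1+t^{*}\omega_2\in U$ by convexity; this proves (b). When $\al=0$ one runs the construction of Step 3 with $\beta$ and $\ep$ tending to $0$, obtaining $f$-invariant measures in $U$ of arbitrarily small entropy, and passes to a weak$^{*}$ limit: by upper semi-continuity of the entropy this limit is an invariant measure of entropy exactly $0$, and it lies in $U$ after replacing $U$ once by a slightly smaller neighbourhood with compact closure inside $U$. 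Combining (a) and (b) with the observation of Step 1 that $\cm_e(X,f,\al)$ is a $G_\delta$ in the Baire space $\cm^\al(X,f)$, the Baire category theorem yields the desired residuality.
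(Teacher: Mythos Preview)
This theorem is not proved in the present paper: it is quoted verbatim from the author's companion paper \cite{Sunintent} (note the attribution ``\cite[Theorem 1.1]{Sunintent}'' in the statement) and is used here only as background and motivation. There is therefore no proof in this paper to compare your proposal against.

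That said, your outline is broadly in line with how such results are obtained. The reduction in Step~1 via upper semicontinuity of entropy (from asymptotic entropy expansiveness) and the $G_\delta$ nature of the ergodic measures is standard and correct; so is the Baire argument, since both sets (a) and (b) are dense $G_\delta$'s in the compact space $\cm^\al(X,f)$. Step~2 is exactly the Pfister--Sullivan entropy-density result. The heart of the matter is Step~3, and the present paper actually records the relevant fact as Proposition~\ref{entropydense} (again cited from \cite{Sunintent}): for any $\mu$, any $\eta,\ep,\beta>0$ one can build a compact invariant set $\Lambda$ whose invariant measures are all $\eta$-close to $\mu$ and with $h(\Lambda,f,\ep)<\beta$. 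Your sketch of Step~3 is essentially a description of how that proposition is proved; using asymptotic entropy expansiveness to pass from $h(\Lambda,f,\ep)<\beta$ to $h_{\omega_2}(f)\le\beta$ for small $\ep$ is exactly the right move. Step~4 is then a clean affinity/intermediate-value argument.

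One small caution: in Step~4 for $\al=0$ you take a weak$^*$ limit of measures in $U$ and claim it lies in $U$ ``after replacing $U$ once by a slightly smaller neighbourhood with compact closure inside $U$''; this is fine, but you could avoid the limit entirely by noting that for $\al=0$ the level set $\{h_\mu=0\}$ already contains the measures supported on the zero-entropy $\Lambda$ produced by Proposition~\ref{entropydense} with $\ep$ small enough that asymptotic entropy expansiveness kicks in.
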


This paper mainly goes in the opposite direction.
Let $(X,f)$ be a system with the approximate product property.
Theorem \ref{alresidual} does not make sense if the system has zero topological
entropy.
It is natural to ask what happens in the zero-entropy case, which also provides conditions for the system to have positive topological entropy.
The entropy denseness property proved in \cite{PfSu} indicates that ergodic measures are dense 
in $\cm(X,f)$.  
However, entropy denseness does not guarantee the existence
of multiple ergodic measures.
We shall show that 
$\cm(X,f)$ is actually a singleton 
if $(X,f)$ has zero topological entropy.
Moreover, we show that the converse is also true even if
asymptotic entropy expansiveness is not assumed.

\begin{theorem}\label{thmunierg}
	Let $(X,f)$ be a system with the approximate product property.
	Then $(X,f)$ is uniquely ergodic if and only if $(X,f)$ has zero
	topological entropy.
\end{theorem}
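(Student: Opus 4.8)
The plan is to prove the two implications separately, each by contraposition, using the approximate product property to manufacture many separated orbit segments. Throughout, $\cm(X,f)$ denotes the simplex of invariant measures (non-empty since $X$ is compact); since ergodic measures are exactly its extreme points, $(X,f)$ is uniquely ergodic if and only if it has a unique ergodic measure, and by the variational principle zero topological entropy means $h_\nu(f)=0$ for every $\nu\in\cm(X,f)$.

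\textbf{Zero entropy implies unique ergodicity.} Suppose, for contradiction, that there are distinct ergodic measures $\mu_1\ne\mu_2$. Pick a Lipschitz $\phi\in C(X)$ with $\gamma:=\tfrac13\bigl(\int\phi\,d\mu_2-\int\phi\,d\mu_1\bigr)>0$ and $\mu_j$-generic points $x_j$. For small $\delta_1,\delta_2>0$ let $N=N(\delta_1,\delta_2)$ be given by the approximate product property, and fix $n\ge N$ so large that $\bigl|\tfrac1n\sum_{k=0}^{n-1}\phi(f^kx_j)-\int\phi\,d\mu_j\bigr|<\gamma/10$ for $j=1,2$. For each $\omega\in\{0,1\}^{\NN}$, feed the approximate product property the sequence of length-$n$ orbit segments of $x_{\omega_1+1},x_{\omega_2+1},\dots$ and obtain $y_\omega\in X$ that, block by block, $\delta_1$-shadows these segments off a set of density $<\delta_2$ in each block. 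If $\omega$ and $\omega'$ first differ in coordinate $i$, then on block $i$ the Birkhoff average of $\phi$ along $y_\omega$ is within $\gamma/5$ of $\int\phi\,d\mu_{\omega_i+1}$, and likewise for $y_{\omega'}$, so (for $\delta_1,\delta_2$ small) the two block averages differ by more than $\gamma$; hence at some time in that block $\phi$ differs by more than $\gamma$ on the two orbits, forcing $d(f^ty_\omega,f^ty_{\omega'})>\gamma/\mathrm{Lip}(\phi)=:\epsilon_0$. Thus $\{y_\omega:\omega\in\{0,1\}^k\}$ is a $(kn,\epsilon_0)$-separated set of size $2^k$, so $h(f)\ge(\log 2)/n>0$, a contradiction. (Any gaps the property inserts between blocks are controlled and merely adjust the constant.)

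\textbf{Unique ergodicity implies zero entropy.} Let $\mu$ be the unique invariant measure, so $h(f)=h_\mu(f)$; assume $h_\mu(f)>0$. If $(X,f)$ is asymptotically entropy expansive this already fails: by Theorem~\ref{alresidual}, $\cm_e(X,f,\alpha)$ would have to be dense in the singleton $\cm^\alpha(X,f)=\{\mu\}$ for every $\alpha\in[0,h(f))$, forcing $h_\mu(f)=\alpha$ for all such $\alpha$. The point of the theorem is to discard this hypothesis. The natural strategy is to build a second invariant measure: pass first to $\mathrm{supp}\,\mu$, where $\mu$ has full support and the system is therefore minimal (one must check the approximate product property survives this restriction), and then try to combine the exponential supply of $(n,\epsilon)$-separated orbit segments coming from $h_\mu(f)>0$ via the Katok entropy formula with a concatenation scheme whose empirical averages either oscillate or converge to something $\ne\mu$; alternatively, one may hope to extract, from the Pfister--Sullivan entropy density together with a new combinatorial ingredient, an ergodic measure of entropy strictly below $h_\mu(f)$.

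\textbf{Main obstacle.} The first implication is routine once the approximate product property is available; the difficulty is the second one without asymptotic entropy expansiveness. The trap is that unique ergodicity makes \emph{every} sufficiently long orbit segment $\epsilon$-generic for $\mu$, so any point assembled from long orbit segments by the approximate product property is itself $\mu$-generic — the naive concatenation cannot produce a new invariant measure, and the contradiction must come from a finer use of the mistake/gap structure of the property against the uniformity of Birkhoff convergence. I expect this to be the step requiring the essential new idea.
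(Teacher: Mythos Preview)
Your proposal has genuine gaps in both directions.

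\textbf{First direction (not uniquely ergodic $\Rightarrow$ positive entropy).} Your dismissal ``any gaps the property inserts between blocks are controlled and merely adjust the constant'' hides exactly the main difficulty. In the approximate product property the gap sequence $\sg=\{t_k\}$ depends on the input sequence $\sC$, so $y_\omega$ and $y_{\omega'}$ come with \emph{different} gap sequences even where $\omega$ and $\omega'$ agree. Hence ``block $i$'' of $y_\omega$ occupies the time window $[s_i(\omega),s_i(\omega)+n-1]$ while ``block $i$'' of $y_{\omega'}$ occupies $[s_i(\omega'),s_i(\omega')+n-1]$, and the offset $|s_i(\omega)-s_i(\omega')|$ can be as large as $(i-1)\delta_1 n$. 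Knowing that the Birkhoff average of $\phi$ over the \emph{first} window for $y_\omega$ is near $\int\phi\,d\mu_{\omega_i+1}$ and over the \emph{second} window for $y_{\omega'}$ is near $\int\phi\,d\mu_{\omega'_i+1}$ does not produce a single time $t$ with $|\phi(f^ty_\omega)-\phi(f^ty_{\omega'})|$ large. Shrinking $\delta_1$ to control the cumulative offset forces $n=n(\delta_1)$ to grow, so the entropy lower bound $(\log 2)/((1+\delta_1)n)$ degenerates. The paper explicitly flags this variable-gap issue as the main obstacle and solves it by using \emph{four} pairwise disjoint invariant sets $\Lambda_1,\dots,\Lambda_4$ (obtained from Proposition~\ref{entropydense} applied to $\mu_1,\mu_2$ and two intermediate convex combinations), placing $y_1,y_3$ at odd positions and $y_2,y_4$ at even positions of $\sC_\xi$. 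The key lemma then splits into two cases: either the cumulative offset stays below $4\delta m$ at the first index where $\xi,\xi'$ differ (and one compares overlapping pieces of the same block index), or the offset first exceeds $4\delta m$ at some earlier index $\tau$, in which case block $\tau$ of $z_\xi$ overlaps block $\tau+1$ of $z_{\xi'}$; the alternating parity guarantees $x_\tau(\xi)\ne x_{\tau+1}(\xi')$ regardless of $\xi,\xi'$, so separation still follows. Two sets are not enough for this second case.

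\textbf{Second direction (positive entropy $\Rightarrow$ not uniquely ergodic).} You correctly identify this as the subtle direction and correctly note that every long orbit segment is $\mu$-generic, which kills naive concatenation arguments; but you stop at ``requiring the essential new idea''. The paper's idea is short and does not use orbit concatenation at all: by Proposition~\ref{entropydense} (a byproduct of entropy denseness), for each $k$ there is a compact invariant set $\Lambda_k$ with $h(\Lambda_k,f,1/k)<1/k$. Set $\Gamma_k=\bigcap_{j\le k}\Lambda_j$. Either some $\Gamma_k$ and $\Lambda_{k+1}$ are disjoint, and each supports an ergodic measure, giving two; or $\Gamma:=\bigcap_k\Lambda_k$ is nonempty, compact, invariant, and satisfies $h(\Gamma,f,1/k)<1/k$ for all $k$, hence $h(\Gamma,f)=0$, so $\Gamma$ carries an ergodic measure of zero entropy while the variational principle provides one of positive entropy. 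Your sketch of restricting to $\supp\mu$ and invoking minimality does not lead anywhere here (and the approximate product property is not known to pass to closed invariant subsets).
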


We remark that the main difficulty we encountered in the proof of Theorem
\ref{thmunierg} comes from the variable gaps 
caused by the way we define the approximate product
property. Similar difficulties also arise in the works 
\cite{Sun19, Sun18flow, Sunze}
of the author. 
Each of these works presents a different solution to this issue.
Dealing with the variable gaps is
not a crucial ingredient in other
works on the approximate product property or the gluing orbit property, such as 
\cite{BTV, PfSu, Sunintent}.
However, it is just this issue, as well as our results, 
that reflects the
substantial difference between the weaker specification-like properties and
the stronger ones.

Theorem \ref{thmunierg} is more than a positive answer to the question of Parry and
Herman in the class of systems with the approximate product property.
Along with the results in \cite{PfSu} and \cite{Sunintent},
we have a dichotomy 
on the structure of 
$\cm(X,f)$
for a system with
the approximate product property, which is completely determined 
by its topological entropy:
$$\begin{cases}
h(f)=0 \iff {}&{} \text{$\cm(X,f)$ is a singleton}.\\
h(f)>0 \iff {}&{} \text{$\cm(X,f)$ is a Poulsen simplex}.
\end{cases}$$
Moreover, when $h(f)>0$, if in addition $(X,f)$ is asymptotically entropy expansive, then
the system has the generic structure of metric entropies as described in
Theorem \ref{alresidual}.  Readers are referred to \cite{LOS, Phelps} for more details about the Poulsen simplex.

We have shown in \cite{Sunze} that a zero-entropy system with gluing orbit property
must be minimal and equicontinuous.
This does not
hold for systems with
the approximate product property. 
A zero-entropy systems with the approximate product property may be topologically mixing or may not be topologically transitive.
See Example \ref{exnonmini}.
We have shown in \cite{Sunintent} that 
both unique ergodicity and zero topological entropy can be derived from
the approximate product product and minimality. 
It turns out that non-minimality is caused exactly by the mistakes allowed in
the tracing property \eqref{ndeltrace} in the definition of
the approximate product property. Considering this, we introduce a new
notion between the approximate product
property and the tempered gluing orbit property introduced in \cite{Sunintent}.
Under this so-called \emph{strict approximate product property}
(see Definition \ref{defsapp}), minimality is also equivalent to zero topological entropy.

\begin{theorem}\label{thmsapp}
	Let $(X,f)$ be a system with the strict approximate product property.
	Then the following are equivalent:
	\begin{enumerate}
		\item $(X,f)$ is minimal.
		\item $(X,f)$ is uniquely ergodic.
		\item $(X,f)$ has zero topological entropy.
	\end{enumerate}
\end{theorem}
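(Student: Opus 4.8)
The plan is to prove the cycle of implications $(1)\Rightarrow(3)\Rightarrow(2)\Rightarrow(1)$, leveraging Theorem \ref{thmunierg} for the bulk of the work. The implication $(3)\Rightarrow(2)$ is immediate: the strict approximate product property is a strengthening of the approximate product property, so a zero-entropy system with the strict approximate product property has the ordinary approximate product property as well, and Theorem \ref{thmunierg} gives unique ergodicity. Likewise, $(2)\Rightarrow(3)$ is automatic once we know $(2)\Rightarrow(1)$, because unique ergodicity together with zero entropy would follow, but it is cleaner to note that unique ergodicity already implies zero entropy via Theorem \ref{thmunierg} (the ``only if'' direction), so in fact $(2)$ and $(3)$ are equivalent purely by Theorem \ref{thmunierg} applied to a system with the strict approximate product property. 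Thus the only genuinely new content is the equivalence of minimality with the other two conditions, i.e.\ $(1)\Rightarrow(3)$ and, say, $(2)\Rightarrow(1)$ (or $(3)\Rightarrow(1)$).

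For $(1)\Rightarrow(3)$: a minimal system with the approximate product property has zero topological entropy --- this is already recorded in the excerpt as a consequence of \cite{Sunintent} (``both unique ergodicity and zero topological entropy can be derived from the approximate product property and minimality''). Since the strict approximate product property implies the approximate product property, minimality plus the strict approximate product property gives zero entropy directly. So this direction requires no new argument beyond citing that fact.

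The substantive step is $(3)\Rightarrow(1)$ (equivalently, using what we already have, deriving minimality from zero entropy under the \emph{strict} hypothesis). By Theorem \ref{thmunierg} the system is uniquely ergodic; let $\mu$ be the unique invariant measure. If the system were not minimal, there would be a proper closed invariant subset $Y\subsetneq X$; by unique ergodicity $\supp\mu$ is the unique minimal set and is contained in every closed invariant set, so $\supp\mu\subsetneq X$, and we can pick a point $x\in X\setminus\supp\mu$ together with a small ball $B$ around $x$ disjoint from $\supp\mu$. The idea is to use the strict approximate product property to glue long orbit segments that include a visit to $x$ (and hence stay in $B$ for a while) to segments tracking a generic point of $\mu$; because the strict version forbids the ``mistakes'' in the tracing relation \eqref{ndeltrace} that are allowed in the ordinary approximate product property, the glued pseudo-orbit is genuinely shadowed, producing an invariant measure that gives positive mass to $B$ --- contradicting uniqueness, since $\mu(B)=0$. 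Concretely, one builds a sequence of points whose empirical measures converge to a weak-$*$ limit $\nu$ with $\nu(B)>0$, so $\nu\neq\mu$, contradiction.

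The hard part will be making the gluing argument in $(3)\Rightarrow(1)$ precise in the presence of the variable gaps inherent in the (strict) approximate product property: one must control both the positions and the lengths of the gaps between consecutive tracked segments so that the fraction of time the constructed orbit spends near $x$ does not get washed out in the Cesàro average, and one must verify that the ``strict'' tracing --- with no allowed errors along the prescribed segments --- actually forces a definite amount of time inside $B$. This is precisely the ``variable gaps'' difficulty flagged after Theorem \ref{thmunierg}, and I expect the proof to reuse the bookkeeping devices (choosing gap bounds $\ll$ segment lengths, summability of the mistake densities) from \cite{Sunze, Sun19} adapted so that the error budget along the $B$-segments is zero. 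Everything else --- the weak-$*$ compactness of the space of invariant measures, the standard fact that an empirical-measure limit is invariant, and the observation that $\supp\mu$ is the unique minimal set under unique ergodicity --- is routine.
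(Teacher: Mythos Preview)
Your overall decomposition matches the paper exactly: Theorem~\ref{thmunierg} gives $(2)\Leftrightarrow(3)$, Proposition~\ref{propminimal} (from \cite{Sunintent}) gives $(1)\Rightarrow(2),(3)$, and the only new ingredient is $(2)\Rightarrow(1)$, which the paper isolates as Proposition~\ref{uniergminisapp}.

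For that step your idea is correct but more elaborate than what the paper does. The paper does not pass through $\supp\mu$, empirical measures, or weak-$*$ limits, and it does not alternate segments near $x$ with segments tracking a $\mu$-generic point. Instead it argues by contraposition: if $(X,f)$ is not minimal, pick $x,x'$ with $d(f^n(x),x')\ge\gamma$ for all $n$, take a bump function $\varphi$ supported in $B(x',2\ep)$ with $\ep<\gamma/3$, and apply the strict approximate product property to the \emph{constant} sequence $\sC=\{x'\}^{\ZZ^+}$ with $\delta=1$. Strict $(0,\ep)$-tracing forces $f^{s_k}(z)\in\overline{B(x',\ep)}$ for every $k$, while the crude bound $s_k\le 2(k-1)m$ gives $\limsup_n\frac1n\sum_{k<n}\varphi(f^k(z))\ge\frac1{2m}>0$; meanwhile the Birkhoff averages of $\varphi$ along $x$ are identically zero. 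Two points with different limiting averages of a continuous function contradict unique ergodicity directly.

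So the ``variable gaps'' bookkeeping you anticipate as the hard part is, in this instance, a one-line estimate; there is no need for summable mistake densities or the machinery of \cite{Sunze,Sun19}. Your route via an auxiliary invariant measure would also work, but the paper's test-function argument is shorter and avoids the weak-$*$ compactness step entirely.
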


Comparing Theorem \ref{thmsapp} with the result in \cite{Sunze}, we are still not sure  if in the class of systems with
the tempered gluing orbit property or the strict the approximate product property,
zero topological entropy implies uniformly rigidity or equicontinuity.

By examining the examples we find that some phenomena appear in a more general
way, as exhibited in the following theorems.

\begin{theorem}\label{thmper}
	Let $(X,f)$ be a system with a periodic point $p\in X$. Then
	$(X,f)$ has the approximate product property and zero topological entropy 
	if and only if $(X,f)$ is uniquely
	ergodic, i.e. the periodic measure supported on the orbit of $p$ is the unique ergodic measure
	for $(X,f)$.
\end{theorem}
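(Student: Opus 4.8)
The plan is to prove both implications by reducing to the earlier theorems via a single auxiliary observation: that a system with a periodic point and the approximate product property must, in fact, be topologically transitive onto the closure of the orbit structure it traces, and that zero entropy then forces that closure to collapse. First I would address the easier direction. Suppose $(X,f)$ is uniquely ergodic with the periodic measure $\mu_p$ on the orbit of $p$ as its unique ergodic measure. Unique ergodicity already puts us in the hypothesis of Theorem \ref{thmunierg} as soon as we know the approximate product property holds, which would give zero topological entropy for free; so the real content here is to show that unique ergodicity together with the existence of a periodic point forces the approximate product property. For this I would show $X = \overline{\bigcup_{n\ge 0} f^n(\{p\})}$ equals the (finite) orbit of $p$: if $X$ strictly contained that orbit, then by taking Birkhoff averages along orbits of points outside $\mathrm{orb}(p)$, or by invoking the variational principle and the fact that $\mathcal M(X,f)$ is a nonempty compact convex set whose extreme points are the ergodic measures, one produces an invariant measure charging $X\setminus\mathrm{orb}(p)$, contradicting that $\mu_p$ is the only ergodic (hence the only) invariant measure. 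Once $X$ is a single periodic orbit, the approximate product property is immediate: with $X$ finite, any $\ep>0$ is larger than the resolution of the space, so the tracing condition \eqref{ndeltrace} is vacuously satisfiable — one simply concatenates periodic pieces, and zero entropy is obvious.

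For the forward direction, assume $(X,f)$ has the approximate product property, zero topological entropy, and a periodic point $p$ of period $k$. By Theorem \ref{thmunierg}, zero entropy already yields that $(X,f)$ is uniquely ergodic, so $\mathcal M(X,f) = \{\mu\}$ for a single ergodic $\mu$. It remains to identify $\mu$ with the periodic measure $\mu_p$ supported on $\mathrm{orb}(p)$. But $\mu_p$ is itself an $f$-invariant probability measure, so by uniqueness $\mu = \mu_p$. That is the whole argument for this direction — the periodic orbit supplies an explicit invariant measure, and uniqueness pins it down.

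So the two directions are structurally asymmetric: the forward direction is essentially a one-line consequence of Theorem \ref{thmunierg} plus the trivial observation that a periodic orbit carries an invariant measure, while the converse requires the genuine work. The main obstacle will be the claim, in the converse, that unique ergodicity plus a periodic point forces $X$ to equal a single periodic orbit. One must rule out the possibility that $X$ has a complicated closure while still admitting only one invariant measure which happens to sit on $\mathrm{orb}(p)$; the support of the unique invariant measure need not a priori be all of $X$. The clean way around this is to note that the support of the unique invariant measure $\mu_p$ is $\mathrm{orb}(p)$, and that for any $x\in X$ every weak-$*$ limit point of $\tfrac1n\sum_{i=0}^{n-1}\delta_{f^i x}$ is $f$-invariant, hence equals $\mu_p$; this forces the forward orbit of every $x$ to have all its time averages concentrated on $\mathrm{orb}(p)$. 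I would then use the approximate product property in the contrapositive: if some $x$ lies outside $\mathrm{orb}(p)$, approximately product-tracing the periodic orbit of $p$ against itself with a small gap density produces points whose orbit closure properly exceeds $\mathrm{orb}(p)$ yet whose empirical measures still must converge to $\mu_p$ — and, combined with the entropy-denseness of ergodic measures from \cite{PfSu} applied inside $\mathcal M(X,f)=\{\mu_p\}$, one extracts the needed contradiction, or more directly one shows that the approximate product property with a nontrivial ambient space would manufacture a second invariant measure. I expect that last step — converting "the space is too big" into "there is a second invariant measure" using the tracing property with variable gaps — to be where the variable-gap difficulty flagged after Theorem \ref{thmunierg} reappears, and where the bulk of the care is needed.
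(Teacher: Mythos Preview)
Your forward direction is correct and matches the paper: Theorem~\ref{thmunierg} gives unique ergodicity, and the periodic measure $\mu_p$ is invariant, hence is the unique invariant measure.

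Your converse direction, however, has a genuine gap. The central claim --- that unique ergodicity with $\mu_p$ supported on $\mathrm{orb}(p)$ forces $X=\mathrm{orb}(p)$ --- is simply false. The map $f(x)=x/2$ on $[0,1]$ is uniquely ergodic with unique measure $\delta_0$, yet $X=[0,1]$. More to the point, Example~\ref{exnonmini} in this very paper exhibits a topologically mixing subshift whose only ergodic measure is the Dirac mass at the fixed point $0^\infty$; it has the approximate product property and zero entropy, and $X$ is infinite. So the reduction to a finite periodic orbit cannot work. You recognize the difficulty yourself, but your proposed workaround is circular: you invoke the approximate product property (``approximately product-tracing the periodic orbit of $p$ against itself'') in the course of trying to establish it. And the final suggestion --- that the approximate product property on a nontrivial ambient space would manufacture a second invariant measure --- is refuted by the same Example~\ref{exnonmini}.

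The paper's argument is entirely different and does not attempt to shrink $X$. For a fixed point $p$, unique ergodicity gives \emph{uniform} convergence of Birkhoff averages; applying this to a bump function supported outside $B(p,\ep)$ shows that for $n$ large, every orbit segment of length $n$ spends at most $\delta_2 n$ iterates outside $B(p,\ep)$. Hence the constant orbit of $p$ itself $(\delta_2,\ep)$-traces any sequence $\sC$ with trivial gaps $\sg=\{1\}^{\ZZ^+}$, which is exactly the approximate product property. Zero entropy follows from the variational principle. The periodic case is then reduced to the fixed-point case by passing to $f^N$ (where $N$ is the period) and proving separately that the approximate product property descends from $(X,f^N)$ to $(X,f)$. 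The key idea you are missing is that the ``mistakes'' permitted in the tracing condition~\eqref{ndeltrace} are precisely what allow a single fixed point to trace \emph{everything} once all orbits are statistically concentrated near it.
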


\begin{remark}
	In case that $(X,f)$ has no periodic point, unique ergodicity does not
	imply either the approximate product property or zero topological entropy.
	See Example \ref{exaper}.
\end{remark}

\begin{theorem}\label{zentappinterval}
	Let $f:I\to I$ be a continuous map on a closed interval $I$. Then $(I,f)$ has
	the approximate product property and zero topological entropy if and only if
	it has a unique attracting fixed point, i.e. there is $p\in I$ such that
	$f(p)=p$ and
	$$\lim_{n\to\infty}f^n(x)=p\text{ for every }x\in I.$$
\end{theorem}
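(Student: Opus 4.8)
The plan is to prove both directions by combining Theorem \ref{thmper} with the well-developed structure theory of zero-entropy interval maps. For the backward direction, suppose $f$ has a unique attracting fixed point $p$ with $f^n(x)\to p$ for every $x\in I$. Then the periodic measure $\delta_p$ is clearly invariant, and since every point is attracted to $p$, any invariant measure must be supported on $\{p\}$ by a standard Poincar\'e recurrence / averaging argument (the empirical measures $\frac1n\sum_{k=0}^{n-1}\delta_{f^k(x)}$ converge to $\delta_p$ for all $x$). Hence $(I,f)$ is uniquely ergodic, and Theorem \ref{thmper} (applied with the periodic point $p$) immediately gives that $(I,f)$ has the approximate product property and zero topological entropy. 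This half requires essentially no new work.

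For the forward direction, assume $(I,f)$ has the approximate product property and zero topological entropy. By Theorem \ref{thmper} it suffices to find a fixed point, since then unique ergodicity holds and I must identify the unique ergodic measure as a fixed-point mass and upgrade this to global attraction. First I would note that a continuous self-map of a closed interval always has a fixed point (intermediate value theorem applied to $f(x)-x$), so Theorem \ref{thmper} applies and $(I,f)$ is uniquely ergodic; call $\mu$ the unique invariant measure. The key classical input is Sharkovsky-type theory: a continuous interval map has zero topological entropy if and only if it has no periodic point whose period is not a power of $2$ (Misiurewicz, Bowen--Franks); moreover $h(f)=0$ forces the set of periods to be $\{1,2,4,\dots\}$ or a finite subset thereof. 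I would then argue that unique ergodicity rules out any periodic orbit of period $\ge 2$: such an orbit carries its own periodic invariant measure distinct from $\mu$ unless $\mu$ is exactly that periodic measure, but a period-$2^k$ orbit with $k\ge 1$ would also force (again by Sharkovsky) the existence of a fixed point carrying a \emph{different} periodic measure, contradicting uniqueness. Hence the only periodic points are fixed points, and unique ergodicity forces exactly one fixed point $p$ with $\mu=\delta_p$.

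It remains to show $f^n(x)\to p$ for every $x\in I$. Here I would use that $\mu=\delta_p$ is the unique invariant measure together with the fact that for interval maps with zero entropy and no periodic points other than the fixed point $p$, the $\omega$-limit set of every point must be a subset of the (unique) minimal sets; unique ergodicity forces the only minimal set to be $\{p\}$, so $\omega(x)=\{p\}$ for all $x$, which is exactly $f^n(x)\to p$. Equivalently, by the unique ergodicity one has $\frac1n\sum_{k=0}^{n-1}\delta_{f^k(x)}\to\delta_p$ for every $x$, and for interval maps this Ces\`aro convergence, combined with the absence of other periodic orbits (so no periodic ``trapping'' away from $p$) and the one-dimensional order structure, can be boosted to genuine convergence $f^n(x)\to p$; intervals $[f^n(x), p]$ (or $[p, f^n(x)]$) are eventually nested or shrink because any limit point of $f^n(x)$ would be a recurrent point lying in a minimal set other than $\{p\}$.

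The main obstacle I expect is the last step: upgrading ``$\delta_p$ is the unique invariant measure'' (a Ces\`aro / measure-theoretic statement) to the pointwise topological statement ``$f^n(x)\to p$ for all $x$.'' For a general compact system unique ergodicity with a fixed-point measure does \emph{not} imply global attraction, so one genuinely needs the one-dimensional structure — specifically the classification of $\omega$-limit sets of zero-entropy interval maps and the fact that every infinite $\omega$-limit set would support an invariant measure other than $\delta_p$ (a ``solenoidal'' measure on a Cantor minimal set), while every $\omega$-limit set equal to a finite set would be a periodic orbit, hence $\{p\}$. Assembling the correct references for these interval-dynamics facts (Sharkovsky's theorem, the Misiurewicz--Bowen--Franks characterization of zero entropy, and the structure of $\omega$-limit sets for zero-entropy maps) and checking that unique ergodicity eliminates all the alternatives is where the care is needed; the rest is routine.
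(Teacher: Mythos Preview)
Your outline is correct, and the backward direction is exactly the paper's argument (via Theorem \ref{thmper}/Proposition \ref{propfix}). For the forward direction you and the paper both arrive at the intermediate statement ``$f$ has a unique fixed point $p$ and no other periodic points''; your Sharkovsky detour here is harmless but unnecessary, since unique ergodicity by itself already kills any second periodic orbit (each one carries its own ergodic measure, and a fixed point is already guaranteed by the intermediate value theorem). The genuine divergence is in the final step, upgrading this to global attraction. You propose to invoke the structure theory of $\omega$-limit sets for zero-entropy interval maps (Blokh's decomposition: infinite $\omega$-limit sets in the zero-entropy regime are solenoidal and carry a non-atomic invariant measure, contradicting unique ergodicity), whereas the paper gives a fully elementary, self-contained argument (Lemma \ref{fnxgex} and Proposition \ref{propattract}): using only the intermediate value theorem one shows that if $x<p$ then $f^n(x)>x$ for every $n$ (otherwise some iterate would map an interval over itself and produce a new periodic point), and symmetrically for $x>p$; this one-sided monotonicity then forces $f^n(x)\to p$ by a short case analysis on whether the orbit eventually stays on one side of $p$. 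Your route is valid once the classification references are assembled, but the paper's buys you a proof with no external machinery and no appeal to $\omega$-limit-set theory at all.
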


Theorem \ref{zentappinterval}
provides a complete description of continuous interval maps with the approximate
product property and zero topological entropy.
By \cite[Corollary 40]{KLO} and \cite[Example 3.1]{BTV}, a continuous graph map has the approximate product property if it is topologically transitive.
As a corollary, 
we obtain another proof of the fact that
a topologically transitive
continuous interval map (or graph map with at least two periodic orbits)
must have positive topological entropy (cf. \cite[Theorem A]{ADR}).

We remark that by \cite{Blokh} and \cite{Buzzi}, a continuous interval map has the exact specification property 
(see Definition \ref{defpugo})
if and only if it is topologically mixing.
By \cite[Theorem B and C]{ADR},  a totally transitive continuous graph map 
has the periodic specification property
if it has periodic points, otherwise it is topologically conjugate to an irrational rotation and hence has the gluing orbit property.
It is an interesting question 
to
ask if there are any other conditions that are related to 
specification-like
properties for graph maps.

Readers are referred to the book \cite{DGS} and the survey \cite{KLO} for an overview of 
the definitions and results of specification-like properties. More discussions on
the gluing orbit property, the tempered gluing orbit property and the approximate product property,
as well as various examples, can be found in \cite{BTV, Sun19, Sunintent}.
The relations between various specification-like properties,
in our terminology,
are summarized in Figure \ref{figrel}.
Analogous relations hold for periodic specification-like properties.

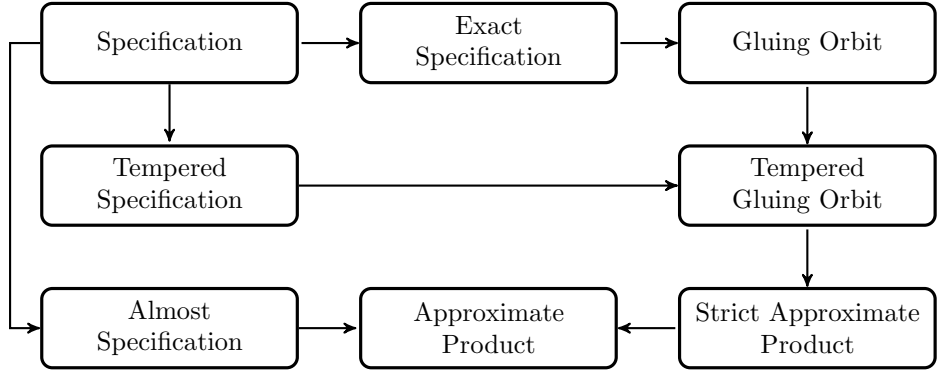
\begin{figure}\label{figrel}
	
	\begin{tikzpicture}
	[node distance=.8cm,
	start chain=going below,]

	\node (sp) [punktchain ]  {Specification};
	\begin{scope}[start branch=venstre,
	every join/.style={->, thick, shorten <=1pt}, ]
	
	\node[punktchain, on chain=going right, join=by {->}] (ugo)  {Exact\\ Specification};
	
	\node[punktchain, on chain=going right, join] (go)      {Gluing Orbit};
	
	\node[punktchain, on chain=going below, join] (tgo) {Tempered Gluing Orbit};
	
	\node[punktchain, on chain=going below, join=by {->}]
	(sapp){Strict Approximate\\ Product};
	
	\node[punktchain, on chain=going left, join=by {->}] (app)      {Approximate Product};
	
	\node[punktchain, on chain=going left, join=by {<-}] (asp)      {Almost\\
		Specification};
	
	\end{scope}
	
	\node[punktchain, on chain=going below, join=by {->}]
	(tsp) {Tempered Specification};

	\draw[->, thick,] (tsp.east)--(tgo.west);

	\draw[->, thick,] (sp.west) --+(-0.4, 0)|- (asp.west);
	
	\end{tikzpicture}
	
	\
	
	\caption{Relations between specification-like properties}
\end{figure}

Notions and results in this paper naturally extends to the continuous-time case, i.e. flows and semi-flows.

\section{Definitions and Basic Properties}
Let $(X,d)$ be a compact metric space. 
Let $f:X\to X$ be a continuous map. 
Then $(X,f)$ is conventionally called a \emph{topological dynamical system} or just a
\emph{system}.
We shall denote by 
$\ZZ^+$ the set of all positive integers and by $\NN$ the set of all nonnegative integers, i.e.
$\NN=\ZZ^+\cup\{0\}$. 
For $n\in\ZZ^+$,  denote
$$\ZZ_n:=\{0,1,\cdots, n-1\}.$$
We fix a metric $D$ on $\cm(X,f)$ that induces the weak-$*$ topology.
Readers are referred to \cite{Walters} for preliminary knowledge about entropy and invariant measures.

\begin{definition}\label{gapshadow}
	Let $\sC=\{x_k\}
	_{k\in\ZZ^+}$ be a sequence in $X$.
	Let $\sss=\{m_k\}
	_{k\in\ZZ^+}$
	and $\sg=\{t_k\}_{k\in\ZZ^+}$
	be sequences of positive integers.
	For $\delta,\ep>0$ and $z\in X$, we say that $(\sC,\sss, \sg)$ is 
	\emph{$(\delta,\ep)$-traced} by $z$
	if 	for each $k\in\ZZ^+$, we have
	\begin{equation}\label{ndeltrace}
	\left|\left\{j\in\ZZ_{m_k}: d(f^{s_{k}+j}(z), f^j(x_k))>\ep\right\}\right|\le\delta
	m_k,
	\end{equation}
	where
	\begin{equation}\label{defsk}
	s_1=s_1(\sss,\sg):=0\text{ and }s_k=s_k(\sss,\sg):=\sum_{i=1}^{k-1}(m_i+t_i-1)\text{ for }k\ge 2.
	\end{equation}
\end{definition}

\begin{remark}
	Definition \ref{gapshadow} naturally extends to the case that
	$\sC,\sss,\sg$ are finite sequences, which allows us to define
	\emph{periodic} specification-like properties by asking that
	the tracing point $z$ is periodic.
\end{remark}

\begin{definition}\label{defapp}
	The system $(X,f)$ is said to have \emph{the approximate product property},
	if for every $\delta_1, \delta_2, \ep>0$, 
	there is $M=M(\delta_1,\delta_2,\ep)>0$
	such that for every $n> M$ and every sequence $\sC$ in $X$, there are a 
	sequence $\sg$ with $\max\sg\le1+\delta_1n$
	and $z\in X$ such that $(\sC,\{n\}^{\ZZ^+},\sg)$ is $(\delta_2,
	\ep)$-traced by $z$.
\end{definition}

The approximate product property was introduced by Pfister and Sullivan \cite{PfSu} to prove
large deviations for $\beta$-shifts.
Definition \ref{defapp} is equivalent to the definitions given in \cite{PfSu}
and \cite{Sunintent}.
Our proof of Theorem \ref{thmunierg} is based on the following fact related to entropy denseness, 
which is implicitly proved in \cite{Sunintent}.

\begin{proposition}[{cf. \cite{Sunintent}}]
	\label{entropydense}
	Let $(X,f)$ be a system with the approximate product property. 
	Then for every $\mu\in\cm(X,f)$,
	every $\eta,\ep,\beta>0$, there is a compact invariant subset $\Lambda=\Lambda(\mu,\eta,\ep,\beta)$ such that
	$D(\mu,\nu)<\eta$
	for every invariant measure $\nu$ supported on $\Lambda$
	and
	$h(\Lambda,f,\ep)<\beta$,
	where $h(\Lambda,f,\ep)$ is the topological entropy of $(\Lambda, f|_{\Lambda})$ calculated at the scale $\ep$.

\end{proposition}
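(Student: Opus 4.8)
The plan is to run the classical ``concatenate generic orbits'' construction: approximate $\mu$ by a finite convex combination of ergodic measures, represent each summand by a generic point, glue long pieces of these orbits according to a periodic pattern so that the time‑averages see the right convex combination, and then invoke the approximate product property to realize this pseudo‑orbit by an honest point, keeping the gaps short enough to be negligible. Concretely, by the ergodic decomposition (or Krein--Milman in the Choquet simplex $\cm(X,f)$), first choose ergodic $\mu_1,\dots,\mu_r$ and rationals $a_i=p_i/q>0$ with $\sum_i a_i=1$ such that $m:=\sum_i a_i\mu_i$ satisfies $D(m,\mu)<\eta/3$, and fix a convex weak‑$*$ neighbourhood $U$ of $m$ with $\overline U\subseteq\{\nu\in\cm(X,f):D(\nu,\mu)<\eta\}$. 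By Birkhoff's theorem fix $y_i\in X$ generic for $\mu_i$ and $N_0$ so that $\frac1{n}\sum_{j\in\ZZ_n}\delta_{f^j(y_i)}$ is weak‑$*$ close to $\mu_i$ for all $n\ge N_0$ and all $i$; by uniform continuity of $x\mapsto\delta_x$ fix $\ep^*\le\ep/10$ so that $d(x,y)\le\ep^*$ makes $\delta_x,\delta_y$ weak‑$*$ close. Fix a small $\delta>0$ (quantified by the estimates below), let $M=M(\delta,\delta,\ep^*)$ be as in Definition \ref{defapp}, and fix $n>\max\{M,N_0\}$, to be taken large. Fix a $q$‑periodic $\iota\colon\ZZ^+\to\{1,\dots,r\}$ taking each value $i$ exactly $p_i$ times per period, so that every block of $L$ consecutive terms of $\iota$ contains each $i$ with frequency within $q/L$ of $a_i$. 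For $\ell\in\NN$ set $\sC_\ell:=\{y_{\iota(\ell+k)}\}_{k\in\ZZ^+}$; by periodicity only $\sC_0,\dots,\sC_{q-1}$ differ, and for each such $\ell$ the approximate product property applied to $(\sC_\ell,\{n\}^{\ZZ^+})$ yields a gap sequence $\sg_\ell$ with $\max\sg_\ell\le1+\delta n$ and a point $z_\ell$ such that $(\sC_\ell,\{n\}^{\ZZ^+},\sg_\ell)$ is $(\delta,\ep^*)$‑traced by $z_\ell$. Put $A:=\{f^{s}(z_\ell):0\le\ell<q,\ s\in\NN\}$ and $\Lambda:=\bigcap_{s\ge0}\overline{f^{s}(A)}$; then $\Lambda$ is compact, nonempty, $f(\Lambda)=\Lambda$, and $\Lambda\subseteq\overline A$, so it suffices to prove both assertions with $\overline A$ in place of $\Lambda$.

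\emph{Entropy.} Fix a partition of $X$ into $c(\ep)<\infty$ cells of diameter $<\ep$. Over the window $\ZZ_N$, a point $f^{s}(z_\ell)\in A$ is a window of the orbit of $z_\ell$ meeting at most $\lceil N/n\rceil+1$ of the length‑$n$ blocks of $\sC_\ell$, hence it \emph{conforms} to a finite datum $\sigma$ consisting of an index word over $\{1,\dots,r\}$ of length $\le\lceil N/n\rceil+1$, a gap word over $\{1,\dots,1+\delta n\}$ of the same length, an entry offset in $\ZZ_n$, and a ``bad set'' $B\subseteq\ZZ_N$ of size $\le b:=(\lceil N/n\rceil+1)(2\delta n+1)+n$, meaning that at every $j\in\ZZ_N\setminus B$ the point $f^{j}(z)$ lies within $\ep^*$ of the corresponding point of the reference concatenation prescribed by $\sigma$. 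For fixed $\sigma$, ``$z$ conforms to $\sigma$'' is a closed condition, so $\overline A$ lies in the union over the finitely many $\sigma$ of these sets; and within one such class, two points lying in the same cell at every coordinate of $B$ agree to within $2\ep^*$ off $B$ and to within one cell‑diameter on $B$, hence are less than $\ep$ apart in the $N$‑th Bowen metric. Therefore any $\ep$‑separated subset of $\overline A$ in the $N$‑th Bowen metric has at most $(\#\sigma)\,c(\ep)^{b}$ elements. Using $\#\sigma\le r^{\lceil N/n\rceil+1}(1+\delta n)^{\lceil N/n\rceil+1}\,n\binom{N}{\le b}$ and $\binom{N}{\le b}\le 2^{N H_2(b/N)}$ with $b/N\to2\delta$ (where $H_2$ is the binary entropy function), one gets $h(\overline A,f,\ep)\le n^{-1}\log\!\big(r(1+\delta n)\big)+H_2(2\delta)+2\delta\log c(\ep)$; choosing $\delta$ small and then $n$ large makes this $<\beta$.

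\emph{Invariant measures.} For $f^{s}(z_\ell)\in A$ and large $N$, decompose $\frac1N\sum_{j\in\ZZ_N}\delta_{f^{j}(z)}$ along the blocks of $\sC_\ell$ it meets. The two boundary blocks carry total mass $\le2n/N$. Each interior block contributes $n$ consecutive points that $\ep^*$‑trace a length‑$n$ orbit of some $y_i$ except on $\le\delta n$ coordinates, followed by a gap of $\le\delta n$ coordinates; so the normalized average over the traced part differs from $\mu_i$ by an amount controlled by the choices of $N_0$, $\ep^*$ and $\delta$, while the gap and the exception coordinates perturb the overall average by $O(\delta)$. Since $\iota$ is $q$‑periodic, the interior blocks visit $\mu_1,\dots,\mu_r$ with frequencies within $O(q/(\#\text{blocks}))$ of $a_1,\dots,a_r$, hence their average is within $O(q/(\#\text{blocks}))$ of $m$. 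Thus, after shrinking $\delta$ and taking $N$ large, $\frac1N\sum_{j\in\ZZ_N}\delta_{f^{j}(z)}\in U$ for all $z\in A$ and all $N\ge N_1$, with $N_1$ independent of $z$; being a closed condition for $\overline U$, this passes to $\overline A$ and hence to $\Lambda$. If now $\nu$ is $f$‑invariant with $\nu(\Lambda)=1$, then by invariance $\nu=\int\big(\frac1N\sum_{j\in\ZZ_N}\delta_{f^{j}(z)}\big)\,d\nu(z)$ is the barycenter of a measure supported in the compact convex set $\overline U$, so $\nu\in\overline U$ and $D(\nu,\mu)<\eta$, as required.

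\emph{Main obstacle.} As signalled in the introduction, the difficulty lies in the variable gaps $t_k\le1+\delta n$: one cannot literally concatenate the orbit segments, so the argument must absorb these gaps both in the entropy count — allowing every admissible gap length contributes only the $n^{-1}\log(\cdots)$ term, which vanishes as $n\to\infty$ — and in the time‑averages, where the gaps, like the tracing exceptions, dilute the empirical measures by $O(\delta)$. A related point is that passing to the closure $\overline A$ must not create points lying outside the finitely many conformity classes used in the entropy count; this is precisely why each class is taken to be closed. Finally, a suffix of a generic orbit need not be generic, which is why the boundary blocks are discarded through the mass bound $\le2n/N$ rather than estimated directly; managing the order of the quantifiers (ergodic approximation, then $\delta$ small, then $n$ large, then $N$ large) is the remaining bookkeeping.
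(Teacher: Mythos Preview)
The paper does not actually prove this proposition; it merely records that the statement ``is implicitly proved in \cite{Sunintent}'' and uses it as a black box. Your sketch is a faithful and correct reconstruction of the standard Pfister--Sullivan/\cite{Sunintent} entropy-denseness construction---approximate $\mu$ by a rational convex combination of ergodic measures, realise each by a generic point, glue long orbit segments in a $q$-periodic pattern via the approximate product property, and pass to the $\omega$-limit set---with the entropy and measure estimates carried out carefully enough to absorb both the variable gaps $t_k\le 1+\delta n$ and the $\delta$-fraction of tracing mistakes.

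Two minor remarks. First, the use of $q$ separate tracing points $z_0,\dots,z_{q-1}$ is harmless but unnecessary: a single $z_0$ already suffices, since any $w\in\omega(z_0)$ is a limit of iterates $f^{s_k}(z_0)$, and the empirical measures of these iterates (for $N\ge N_1$) lie in $\overline U$ by the same windowing argument, so continuity of $z\mapsto\frac1N\sum_{j\in\ZZ_N}\delta_{f^j(z)}$ pushes this to $w$. Second, in your entropy count the factor $r^{\lceil N/n\rceil+1}$ is a gross over-count (the index word is determined by the phase $\ell\in\ZZ_q$), but since it contributes only $n^{-1}\log r\to 0$ this costs nothing. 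Neither point affects the validity of the argument.
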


It is the mistake $\delta_2>0$ allowed in \eqref{ndeltrace}
that introduces non-minimal examples of zero-entropy systems
with the approximate product property. So we suggest 
a stronger condition under which minimality can be guaranteed
by zero entropy and unique ergodicity.

\begin{definition}\label{defsapp}
	The system $(X,f)$ is said to have  the \emph{strict approximate product property},
	if for every $\delta, \ep>0$, 
	there is $M=M(\delta,\ep)>0$
	such that for every $n> M$ and every sequence $\sC$ in $X$,
	there are a 
	sequence $\sg$ with $\max\sg\le1+\delta n$
	and $z\in X$ such that $(\sC,\{n\}^{\ZZ^+},\sg)$ is $(0,
	\ep)$-traced by $z$.

\end{definition}

It is clear that the strict approximate product property is stronger than the approximate product property and weaker than the tempered gluing orbit property
introduced in \cite{Sunintent}. We remark that the difference between the strict
approximate product property and the tempered gluing orbit property is not in
the description of the gaps ($\delta n$ vs a tempered function), but
in the lengths of the orbit segments (equal lengths vs variable lengths)
that can be traced.

In \cite{Sunintent}, we have shown that minimality implies both unique ergodicity and zero topological entropy.

\begin{proposition}[{cf. \cite[Section 5.3]{Sunintent}}]\label{propminimal}
	Let $(X,f)$ be a system with the approximate product property. 
	If $(X,f)$ is minimal, then it is uniquely ergodic and has zero topological entropy.
\end{proposition}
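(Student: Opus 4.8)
The plan is to obtain both conclusions directly from Proposition \ref{entropydense}, exploiting the fact that a minimal system has no proper nonempty closed $f$-invariant subset.

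First I would fix any $\mu\in\cm(X,f)$; such a measure exists by the Krylov--Bogolyubov theorem. Given $\eta,\ep,\beta>0$, Proposition \ref{entropydense} produces a compact invariant set $\Lambda=\Lambda(\mu,\eta,\ep,\beta)$ with $D(\mu,\nu)<\eta$ for every invariant measure $\nu$ supported on $\Lambda$ and with $h(\Lambda,f,\ep)<\beta$. Because $\Lambda$ is nonempty --- a point I return to below --- minimality of $(X,f)$ forces $\Lambda=X$.

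The two assertions then follow by letting the parameters tend to $0$. For zero entropy: with $\Lambda=X$ we get $h(X,f,\ep)<\beta$ for all $\beta>0$, hence $h(X,f,\ep)=0$ for every $\ep>0$, so $h(f)=\sup_{\ep>0}h(X,f,\ep)=0$. For unique ergodicity: with $\Lambda=X$, every $\nu\in\cm(X,f)$ is supported on $\Lambda$, so $D(\mu,\nu)<\eta$; as $\eta>0$ is arbitrary, $\mu=\nu$ for every $\nu\in\cm(X,f)$, and since $\mu\in\cm(X,f)$ was itself arbitrary, $\cm(X,f)$ is a singleton, i.e.\ $(X,f)$ is uniquely ergodic.

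I do not expect a serious obstacle: the variable-gap difficulty that makes Theorems \ref{thmunierg} and \ref{thmsapp} delicate has already been absorbed into Proposition \ref{entropydense}. The one point to check with some care is that the set $\Lambda$ delivered by Proposition \ref{entropydense} is genuinely nonempty, since that is exactly what permits minimality to identify it with $X$; this is clear from the explicit construction of $\Lambda$ in \cite{Sunintent} (any nonempty compact invariant set carries an invariant measure, so the issue is only whether $\Lambda\neq\emptyset$, which the construction guarantees).
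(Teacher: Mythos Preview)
The paper does not supply its own proof of Proposition \ref{propminimal}; it simply cites \cite[Section 5.3]{Sunintent}. Your argument is correct and self-contained relative to what is already stated in the present paper: Proposition \ref{entropydense} does all the work, and minimality collapses every $\Lambda(\mu,\eta,\ep,\beta)$ to $X$, from which both $h(f)=0$ and $|\cm(X,f)|=1$ follow by letting $\beta\to 0$ and $\eta\to 0$ respectively. The one point you flag---nonemptiness of $\Lambda$---is indeed the only thing to check, and your reading is right: Proposition \ref{entropydense} is a form of entropy denseness whose content would be vacuous for $\Lambda=\emptyset$, and the construction in \cite{Sunintent} explicitly produces $\Lambda$ as a nonempty closed invariant set supporting invariant measures $\eta$-close to $\mu$.
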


Theorem \ref{thmsapp} is a combination of Theorem \ref{thmunierg}, Proposition
\ref{propminimal} and the following
proposition.

\begin{proposition}\label{uniergminisapp}
	Let $(X,f)$ be a system with the strict approximate product property. 
	If $(X,f)$ is uniquely ergodic then it is minimal.
\end{proposition}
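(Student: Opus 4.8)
The plan is to argue by contraposition: assuming $(X,f)$ is \emph{not} minimal, I will produce two distinct ergodic measures, contradicting unique ergodicity. Since $(X,f)$ is not minimal, there is a proper closed invariant subset $Y\subsetneq X$, and hence a point $x_0\in X\setminus Y$. Choose $\ep>0$ small enough that the open $\ep$-ball $B(x_0,\ep)$ is disjoint from $Y$; also fix any invariant measure $\mu_Y$ supported on $Y$ (which exists by compactness of $Y$), and its generic point machinery. The strategy is to use the strict approximate product property to build, near the orbit of a point generic for $\mu_Y$, a tracing point $z$ whose orbit spends a positive proportion of its time near $x_0$ (in particular inside $B(x_0,\ep)$) — impossible if the only invariant measure were $\mu_Y$-like and gave zero mass to $B(x_0,\ep)$. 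More precisely, I would instead directly exhibit that the unique ergodic measure $\mu$ must satisfy $\mu(\overline{B(x_0,\ep/2)})>0$ and $\mu(\overline{B(x_1,\ep/2)})>0$ for two well-separated points $x_0,x_1$, then push this to a contradiction via a second application of the tracing property.

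Here is the cleaner route I would actually write. Suppose $\mu$ is the unique ergodic (hence unique invariant) measure and suppose for contradiction that $\supp\mu\ne X$. Pick $x_0\notin\supp\mu$ and $\ep>0$ with $\overline{B(x_0,2\ep)}\cap\supp\mu=\emptyset$, so $\mu(\overline{B(x_0,2\ep)})=0$. Now apply the strict approximate product property with parameters $\delta$ (to be chosen small) and this $\ep$: get $M=M(\delta,\ep)$ and pick any $n>M$. Consider the sequence $\sC$ that is the constant sequence $x_k=x_0$ for all $k$. The property furnishes gaps $\sg$ with $\max\sg\le 1+\delta n$ and a point $z\in X$ such that $(\sC,\{n\}^{\ZZ^+},\sg)$ is $(0,\ep)$-traced by $z$: this means that for every block, \emph{all} $n$ iterates $f^{s_k+j}(z)$ lie within $\ep$ of $f^j(x_0)$, in particular $f^{s_k}(z)\in B(x_0,\ep)$ and more usefully $f^{s_k+j}(z)\in B(f^j(x_0),\ep)$ for all $j\in\ZZ_n$. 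Since the gaps have length at most $1+\delta n$, the density of indices $m$ in $[0,N]$ for which $f^m(z)$ lies within $\ep$ of the forward orbit segment $\{f^j(x_0):j\in\ZZ_n\}$ is at least $\frac{n}{n+\delta n}=\frac{1}{1+\delta}$. Taking a weak-$*$ limit $\nu$ of the empirical measures $\frac1N\sum_{m=0}^{N-1}\delta_{f^m(z)}$ gives an invariant measure $\nu$ with $\nu\big(\overline{B_n(x_0,\ep)}\big)\ge \frac{1}{1+\delta}$, where $B_n(x_0,\ep):=\bigcup_{j\in\ZZ_n}B(f^j(x_0),\ep)$. By unique ergodicity $\nu=\mu$, so $\mu\big(\overline{B_n(x_0,\ep)}\big)\ge\frac{1}{1+\delta}$.

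To finish, I would average this over $n$ to contradict $\mu(\overline{B(x_0,2\ep)})=0$. The point is that $x_0\in B_n(x_0,\ep)$ always (the $j=0$ term), but the other sets $B(f^j(x_0),\ep)$ wander; using invariance of $\mu$ and the pigeonhole/averaging argument one gets that a single $\mu\big(\overline{B(f^{j_0}(x_0),\ep)}\big)$ is bounded below, and then pulling back by $f^{-j_0}$ (using $f$-invariance of $\mu$, noting $\mu(\overline{A})\le \mu(f^{-j_0}(\overline{A})$-neighborhood$)$ after adjusting $\ep$) forces $\mu\big(\overline{B(x_0,2\ep)}\big)>0$, the contradiction. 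Concretely: since $\mu$ is invariant, $\mu(\overline{B_n(x_0,\ep)})\le \sum_{j=0}^{n-1}\mu(\overline{B(f^j(x_0),\ep)})$, and $\mu(\overline{B(f^j(x_0),\ep)})\le \mu(\overline{B(x_0,2\ep)})$ fails in general — so instead I use that $\mu(\overline{B(f^j x_0,\ep)})=\mu(f^{-1}\overline{B(f^j x_0,\ep)})\ge\mu(\overline{B(f^{j-1}x_0,\ep')})$ is \emph{not} automatic either, which tells me the truly careful step is the next paragraph's argument; I expect the main obstacle to be precisely controlling how the ``mistake-free'' tracing near a \emph{single} excluded point $x_0$ forces mass onto a neighborhood of $x_0$ despite the drifting orbit segments and the variable (but bounded) gaps. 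The fix is to choose the constant sequence smartly: take $\sC$ constant equal to a point $y$ with $y\in\supp\mu$ chosen so that its forward orbit \emph{returns} near $x_0$ with controlled frequency — but since $x_0\notin\supp\mu$ no such $y$ exists, which is exactly the leverage: running the argument with $\sC$ constant $=x_0$ gives an invariant measure charging every neighborhood of $x_0$ with definite mass bounded below independent of the neighborhood size (by taking $n=1$, $\delta$ small: then $B_n(x_0,\ep)=B(x_0,\ep)$ and $\mu(\overline{B(x_0,\ep)})\ge\frac{1}{1+\delta}$ for every $\ep>0$), forcing $x_0\in\supp\mu$, contradiction. Thus the heart of the proof is the $n=1$ case of the strict approximate product property, and the main thing to verify carefully is that the strict ($\delta_2=0$) tracing is essential here — the ordinary approximate product property with $\delta_2>0$ would only give $\mu(\overline{B(x_0,\ep)})\ge\frac{1-\delta_2}{1+\delta}$-type bounds that still suffice, so in fact I would double-check whether the proposition needs strictness at all, or whether (as the surrounding discussion suggests) strictness is used only to rule out the ``mistakes'' that create genuinely non-minimal examples elsewhere, with the measure-theoretic conclusion $\supp\mu=X$ following even without it once unique ergodicity is assumed.
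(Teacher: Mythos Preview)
Your overall strategy---contraposition, tracing the constant sequence $\sC=\{x_0\}^{\ZZ^+}$ with $x_0\notin\supp\mu$ to produce an orbit visiting a neighborhood of $x_0$ with positive frequency---is exactly the paper's approach. But you derail yourself at the key step and never recover. You correctly note that $f^{s_k}(z)\in B(x_0,\ep)$ for every $k$, then immediately discard this as less useful than the statement about the union $B_n(x_0,\ep)=\bigcup_{j\in\ZZ_n}B(f^j(x_0),\ep)$. This is backwards: the union is worthless here because $f^j(x_0)$ may well lie in $\supp\mu$ for $j\ge 1$, so a lower bound on $\mu(\overline{B_n(x_0,\ep)})$ proves nothing. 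The observation you threw away \emph{is} the proof: since $s_{k+1}-s_k\le n+\delta n$, the index set $\{s_k\}$ has lower density at least $\frac{1}{(1+\delta)n}$ in $\NN$, so any weak-$*$ limit of the empirical measures of $z$ gives $\mu(\overline{B(x_0,\ep)})\ge\frac{1}{(1+\delta)n}>0$, contradicting $x_0\notin\supp\mu$. The paper does exactly this, phrased via a bump function $\varphi$ supported near $x_0$ and a comparison of Birkhoff averages along $z$ versus along a point whose orbit avoids $x_0$.

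Two further errors. First, your fallback ``take $n=1$'' is unavailable: the strict approximate product property only guarantees tracing for $n>M(\delta,\ep)$, and $M$ is not yours to choose. Second, your closing suspicion that strictness may be unnecessary is wrong and misses the point of the proposition. With the non-strict property the $\delta_2 m$ permitted mistakes in each block can systematically include $j=0$, destroying the conclusion $f^{s_k}(z)\in B(x_0,\ep)$; indeed Example~\ref{exnonmini} exhibits a uniquely ergodic, non-minimal subshift with the (non-strict) approximate product property, so the proposition is false without strictness.
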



\begin{proof}
	The idea of the proof 
	is similar to the proof of 
	\cite[Theorem 4.1]{Sun19}.
	
	Assume that $(X,f)$ is not minimal and it has the strict approximate product property.
	We shall show that $(X,f)$ is not uniquely ergodic. 
	
	As the system is not minimal, there are $x,x'\in X$ and $\gamma>0$
	such that
	$$d\left(f^n(x),x'\right)\ge\gamma\text{ for every }n\in\NN.$$
	We fix $\ep\in(0,\frac\gamma3)$. Take a continuous function $\varphi:X\to\RR$
	such that
	\begin{align*}
	\varphi(y)=1&\text{ for every }y\in\overline{B(x',\ep)};\\
	\varphi(y)=0&\text{ for every }y\notin B(x',2\ep);\\
	0<\varphi(y)<1&\text{ otherwise. }
	\end{align*}
	Then we have
	\begin{equation}\label{xsum}
	\lim_{n\to\infty}\frac1n\sum_{k=0}^{n-1}\varphi\left(f^k(x)\right)=0.
	\end{equation}

	Now we fix $\delta:=1$ and consider the sequence $\sC=\{x'\}^{\ZZ^+}$.
	As $(X,f)$ has the strict approximate product property, there are $m\in\ZZ^+$,
	a sequence $\sg:=\{t_k\}_{k\in\ZZ^+}$ with $\max\sg\le1+\delta m$ and $z\in X$ such that
	$\left(\sC,\{m\}^{\ZZ^+},\sg\right)$ is $(0,
	\ep)$-traced by $z$.
	Let $s_k:=s_k\left(\{m\}^{\ZZ^+},\sg\right)$ for each $k$ as in \eqref{defsk}.
	Then we must have
	$$f^{s_k}(z)\in \overline{B(x',\ep)}
	\text{ and }
	s_k\le(k-1)(m+1+\delta m-1)\le 2(k-1)m\text{ for each }k.$$
	This yields that
	\begin{equation}\label{zsum}
	\limsup_{n\to\infty}\frac1n\sum_{k=0}^{n-1}\varphi\left(f^k(z)\right)\ge
	\limsup_{k\to\infty}\frac1{s_k}\sum_{j=1}^{k-1}\varphi\left(f^{t_j}(z)\right)
	=\limsup_{k\to\infty}\frac {k-1}{s_k}\ge\frac1{2m}>0.
	\end{equation}
	Equations \eqref{xsum} and \eqref{zsum} imply that $\varphi$ does not converge
	pointwise to a constant.
	Hence $(X,f)$ is not uniquely ergodic.
\end{proof}

\begin{definition}\label{defpugo}
	We say that the system $(X,f)$  have the \emph{periodic exact specification property},
	if for every $\ep>0$, 
	there is a nonnegative integer $M=M(\ep)$
	such that for 
	every finite sequence $\sC=\{x_k\}_{k=1}^n$ in $X$ and every 
	finite sequence $\sss=\{m_k\}_{k=1}^n$ of positive integers,
	there 	is $z\in X$ such that $\left(\sC,\sss,\{M+1\}^{n}\right)$ is $(0,
	\ep)$-traced by $z$ and
	$$f^s(z)=z\text{ for }s:=\sum_{i=1}^{n}(m_k+M).$$

\end{definition}

We say that the system $(X,f)$ have the \emph{exact specification property} if the tracing
point $z$ in Definition \ref{defpugo} is not necessarily
a periodic point.
It is clear that the periodic exact specification property implies the exact specification property, whose relations with other specification-like properties are illustrated
in Figure \ref{figrel}. It is shown in \cite{Blokh} and \cite{Buzzi} that
a continuous interval map $(I,f)$ has the periodic exact specification property if and
only if the map is topologically mixing.

\section{Proof of Theorem \ref{thmunierg}}
\subsection{Sufficiency}
In this subsection we prove the `if' part of  Theorem \ref{thmunierg}.
Let $(X,f)$ be a system with the approximate product property 
that is not uniquely ergodic. We shall show that
$(X,f)$ has positive topological entropy.

Let $\mu_1,\mu_2$ be two distinct ergodic measures for $(X,f)$.
Let 
$$\mu_3:=\frac{2\mu_1+\mu_2}{3},\mu_4:=\frac{\mu_1+2\mu_2}{3}
\text{ and }\eta:=\frac{D(\mu_1,\mu_2)}{7}.$$
Then
\begin{equation}\label{muijsep}
D(\mu_i,\mu_j)>2\eta\text{ for }1\le i<j\le 4.
\end{equation}
By Proposition \ref{entropydense},
there are compact invariant sets
$\Lambda_1$, $\Lambda_2$, $\Lambda_3$ and $\Lambda_4$
such that for each $i=1,2,3,4$ and for every invariant measure
$\nu$ supported on $\Lambda_i$, we have
$$D(\nu,\mu_i)<\eta.$$
This implies that 
$$\Lambda_i\cap\Lambda_j=\emptyset\text{ for }1\le i<j\le 4.$$
Otherwise, if $\Lambda_i\cap\Lambda_j\ne\emptyset$ then
$\Lambda_i\cap\Lambda_j$ supports an invariant measure $\nu^*$ and hence
$$D(\mu_i,\mu_j)\le D(\mu_i,\nu^*)+D(\mu_j,\nu^*)< 2\eta,$$
which contradicts with \eqref{muijsep}.

By compactness, we have
$$d^*(\Lambda_i,\Lambda_j):=\min\left\{d(x,y):x\in\Lambda_i,y\in\Lambda_j\right\}>0
\text{ for }1\le i<j\le 4.$$
Denote
$$\gamma:=\frac14\min\left\{d^*(\Lambda_i,\Lambda_j):1\le i<j\le 4\right\}.$$
For each $i=1,2,3,4$, we fix a point $y_i\in\Lambda_i$.
Then for $1\le i<j\le 4$ and every $m,n\in\ZZ^+$, we have
\begin{equation}\label{yijsep}
d\left(f^m(y_i),f^n(y_j)\right)\ge 4\gamma.
\end{equation}

For each $\xi=\{\xi(k)\}_{k=1}^\infty\in\{1,2\}^{\ZZ^+}$, 
denote $\sC_\xi:=\{x_k(\xi)\}_{k=1}^\infty$
such that
\begin{equation}\label{xiconstruct}
x_{2k-1}(\xi)=y_{2\xi(k)-1}\text{ and }x_{2k}(\xi)=y_{2\xi(k)}.
\end{equation}
We fix 
$$\delta\in(0,\frac1{10})
\text{ and }m>M(\delta,\delta,\gamma),$$
where $M(\delta,\delta,\gamma)$ is the constant obtained from the approximate product property as in Definition \ref{defapp}.
For each $\xi\in\{1,2\}^{\ZZ^+}$, we can find 
$\sg_\xi=\{t_k(\xi)\}_{k=1}^\infty$ with $\max\sg\le1+\delta m$ 
and $z_\xi\in X$ such that
$\left(\sC_\xi,\{m\}^{\ZZ^+},\sg_\xi\right)$ is $(\delta,\gamma)$-traced by $z_\xi$.
Denote $s_k(\xi):=s_k\left(\{m\}^{\ZZ^+},\sg_\xi\right)$ as in \eqref{defsk}
for each $k$. 

\begin{lemma}\label{lemsep}
	If there is $n\in\{1,\cdots, N\}$ such that
	$\xi(n)\ne \xi'(n)$, then $z_\xi$ and $z_{\xi'}$
	are $((1+\delta)nm,\gamma)$-separated.
\end{lemma}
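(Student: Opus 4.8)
The plan is to show that the two tracing points $z_\xi$ and $z_{\xi'}$ must be far apart at some iterate within the time window $[0,(1+\delta)nm)$, by exploiting the fact that the orbit segments they are supposed to shadow live in the well-separated pieces $\Lambda_i$. First I would reduce to the block index $n$ where $\xi(n)\ne\xi'(n)$: without loss of generality $\xi(n)=1$ and $\xi'(n)=2$, so that $\sC_\xi$ prescribes $x_{2n-1}(\xi)=y_1$, $x_{2n}(\xi)=y_2$ while $\sC_{\xi'}$ prescribes $x_{2n-1}(\xi')=y_3$, $x_{2n}(\xi')=y_4$ (using \eqref{xiconstruct}). The tracing condition \eqref{ndeltrace} with error $\delta$ at scale $\gamma$ says that, along the time block of length $m$ devoted to $x_{2n-1}(\xi)$, the orbit of $z_\xi$ is within $\gamma$ of the orbit of $y_1$ for at least $(1-\delta)m$ of the $m$ coordinates; similarly the orbit of $z_{\xi'}$ stays within $\gamma$ of the orbit of $y_3$ on a fraction $(1-\delta)m$ of that same block. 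Since $\delta<\tfrac1{10}$, these two good sets of indices overlap in at least $(1-2\delta)m\ge\tfrac45 m>0$ coordinates, so I can pick a single index $j$ in the block on which both tracings are good.

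The key step is then the triangle inequality at that coordinate: writing $a$ for the start time of the relevant block in the $z_\xi$-orbit and $a'$ for the corresponding start time in the $z_{\xi'}$-orbit (these are the $s_k$'s from \eqref{defsk} for the two gap sequences, with $k$ either $2n-1$ or $2n$ depending on which block I use), I get
\begin{equation*}
d\bigl(f^{a+j}(z_\xi),f^{a'+j}(z_{\xi'})\bigr)\ge d\bigl(f^{j}(y_1),f^{j}(y_3)\bigr)-d\bigl(f^{a+j}(z_\xi),f^{j}(y_1)\bigr)-d\bigl(f^{a'+j}(z_{\xi'}),f^{j}(y_3)\bigr)\ge 4\gamma-\gamma-\gamma=2\gamma
\end{equation*}
by \eqref{yijsep}. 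The subtlety is that $(n,\gamma)$-separation of two points requires them to be $\gamma$-far at the \emph{same} iterate, whereas here the two orbits are indexed from different base times $a$ and $a'$. I would handle this by arguing that, because the gaps satisfy $\max\sg\le 1+\delta m$, both $a+j$ and $a'+j$ lie in the interval $[0,(1+\delta)nm)$: indeed $s_k\le (k-1)(m+\delta m)\le(1+\delta)(2n)m\cdot\tfrac{k-1}{2n}$, and for $k\le 2n$ this is at most $(1+\delta)\cdot 2n\cdot m$ — here I would double-check the precise constant, choosing the block (the $(2n-1)$-st rather than the $2n$-th) so that the bound lands inside $(1+\delta)nm$ as claimed, possibly after a harmless adjustment of constants consistent with the later use of the lemma.

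The main obstacle is the bookkeeping of the time indices: matching the "separated at a common iterate" definition of $(N,\gamma)$-separation against the fact that $z_\xi$ and $z_{\xi'}$ shadow the \emph{same} symbolic block but starting at \emph{different} absolute times $s_k(\xi)$ and $s_k(\xi')$, and verifying that the common window $[0,(1+\delta)nm)$ genuinely contains the iterate produced above. This is exactly the "variable gaps" difficulty the author flags in the introduction, and I expect the resolution to be a careful choice of which of the two $m$-blocks associated to index $n$ to use, together with the crude bound $s_k\le 2(k-1)m$ already appearing in the proof of Proposition \ref{uniergminisapp}. Once the index is pinned down, the separation estimate itself is the two-line triangle-inequality computation above.
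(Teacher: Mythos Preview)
Your proposal has a genuine gap at exactly the point you flag as ``the subtlety.'' You derive
\[
d\bigl(f^{a+j}(z_\xi),\,f^{a'+j}(z_{\xi'})\bigr)\ge 2\gamma,
\]
but $(N,\gamma)$-separation requires a \emph{single} iterate $\ell<N$ with $d(f^{\ell}(z_\xi),f^{\ell}(z_{\xi'}))>\gamma$. Knowing that both $a+j$ and $a'+j$ lie in $[0,(1+\delta)nm)$ is irrelevant: if $a\ne a'$ your displayed inequality compares the two orbits at \emph{different} times and says nothing about separation. Since the gap sequences $\sg_\xi$ and $\sg_{\xi'}$ are produced independently by the approximate product property, the offsets $s_k(\xi)$ and $s_k(\xi')$ can differ by as much as $(k-1)\delta m$, which for $k$ of order $n$ can exceed $m$; in that regime your ``block $2n-1$'' for $z_\xi$ need not overlap at all with ``block $2n-1$'' for $z_{\xi'}$, and the argument collapses.

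The paper resolves this with a dichotomy on the size of the misalignment. If $|s_n(\xi)-s_n(\xi')|\le 4\delta m$ (Case~1), the two $n$-th blocks overlap on a set of density at least $1-6\delta$, and one picks a common good index $l$ so that the \emph{same} iterate $s_n(\xi')+l$ sees $z_\xi$ near an iterate of $x_n(\xi)$ and $z_{\xi'}$ near an iterate of $x_n(\xi')$; then \eqref{yijsep} applies because $x_n(\xi)\ne x_n(\xi')$. If instead the misalignment is large (Case~2), one takes the first index $\tau\le n$ where it exceeds $4\delta m$; a one-step increment bound forces $4\delta m<s_\tau(\xi)-s_\tau(\xi')\le 5\delta m$, so block $\tau$ of $z_\xi$ overlaps block $\tau{+}1$ of $z_{\xi'}$. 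Here one cannot assume $\xi$ and $\xi'$ differ at $\tau$, and this is precisely why the construction uses \emph{four} points $y_1,y_2,y_3,y_4$ with the alternating parity scheme \eqref{xiconstruct}: it guarantees $x_\tau(\xi)\ne x_{\tau+1}(\xi')$ automatically (consecutive indices have opposite parity, hence come from disjoint pairs $\{y_1,y_3\}$ and $\{y_2,y_4\}$), so \eqref{yijsep} again yields separation at a common iterate. Your plan never invokes this parity trick and would not survive Case~2.
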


\begin{proof}
	The proof splits into two cases.

	\textbf{\flushleft Case 1.} Suppose that
	$$r:=\left|s_n(\xi)-s_n(\xi')\right|\le 4\delta m.$$
	We may assume that $s_n(\xi)>s_n(\xi')$.
	By the tracing property, there are $A, A'\in\ZZ_m$ such that
	$$|A|,|A'|\ge(1-\delta)m,$$
	$$d\left(f^{s_{n}(\xi)+j}(z_\xi), f^j\left(x_n(\xi)\right)\right)
	\le\gamma\text{ for each }j\in A$$
	and
	$$d\left(f^{s_{n}(\xi')+j}(z_{\xi'}), f^j\left(x_n(\xi')\right)\right)
	\le\gamma\text{ for each }j\in A'.$$
	Then we must have
	$$\left|(r+A)\cap A'\right|\ge|r+A|+|A'|-|r+\ZZ_m|\ge(1-6\delta)m>0.$$
	For $l\in(r+A)\cap A'$, by \eqref{yijsep}, we have
	\begin{align*}
	{}&{}d\left(f^{s_{n}(\xi')+l}(z_{\xi}), f^{s_{n}(\xi')+l}(z_{\xi'})\right)
	\\\ge{}&{}d\left(f^{l-r}\left(x_n(\xi)\right), f^l\left(x_n(\xi')\right)\right) 
	-d\left(f^{s_{n}(\xi)+(l-r)}(z_{\xi}), f^{l-r}\left(x_n(\xi)\right)\right)
	\\&{}
	-d\left(f^{s_{n}(\xi')+l}(z_{\xi'}), f^l\left(x_n(\xi')\right)\right)
	\\\ge{}&{}4\gamma-\gamma-\gamma>\gamma.
	\end{align*}
	Moreover, we have 
	$$s_{n}(\xi')+l\le\sum_{k=1}^{n-1}(m+t_{k}(\xi')-1)+m\le (1+\delta)nm.$$
	So $z_\xi$ and $z_{\xi'}$ are $((1+\delta)nm,\gamma)$-separated.
	
	\textbf{\flushleft Case 2.}
	Suppose that $\tau$ is the smallest positive integer such that
	\begin{equation*}\label{smalltau}
	\tau\le n\text{ and }\left|s_\tau(\xi)-s_\tau(\xi')\right|>4\delta m.
	\end{equation*}
	Then we have $\tau>1$ and 
	$$\left|s_{\tau-1}(\xi)-s_{\tau-1}(\xi')\right|\le 4\delta m.$$
	We may assume that $s_\tau(\xi)>s_\tau(\xi')$.
	Then we have
	\begin{align}
	s_\tau(\xi)-s_\tau(\xi')
	&=s_{\tau-1}(\xi)+(m+t_{\tau-1}(\xi)-1)-s_{\tau-1}(\xi')-(m+t_{\tau-1}(\xi')-1)\notag
	\\&\le s_{\tau-1}(\xi)-s_{\tau-1}(\xi')+(1+\delta m)-1\notag
	\\\label{staudiff}&\le 5\delta m.
	\end{align}
	This yields that
	$$r:=s_{\tau+1}(\xi')-s_\tau(\xi)
	=s_{\tau}(\xi')+(m+t_{\tau}(\xi')-1)-s_{\tau}(\xi)
	\le (1-3\delta) m$$
	and by \eqref{staudiff} we have
	$$r\ge m-(s_\tau(\xi)-s_\tau(\xi'))\ge(1-5\delta)m>0.$$
	By the tracing property, there are $A, A'\in\ZZ_m$ such that
	$$|A|,|A'|\ge(1-\delta)m,$$
	$$d\left(f^{s_{\tau}(\xi)+j}(z_\xi), f^j\left(x_\tau(\xi)\right)\right)
	\le\gamma\text{ for each }j\in A$$
	and
	$$d\left(f^{s_{\tau+1}(\xi')+j}(z_{\xi'}), f^j\left(x_{\tau+1}(\xi')\right)\right)
	\le\gamma\text{ for each }j\in A'.$$
	Then we must have
	$$\left|(r+A)\cap A'\right|\ge|r+A|+|A'|-|r+\ZZ_m|\ge \delta m>0.$$
	Note that by our construction \eqref{xiconstruct}, 
	we must have
	$x_\tau(\xi)\ne x_{\tau+1}(\xi')$. 
	For $l\in(r+A)\cap A'$, by \eqref{yijsep}, we have
	\begin{align*}
	{}&{}d\left(f^{s_{\tau}(\xi)+l}(z_{\xi}), f^{s_{\tau}(\xi)+l}(z_{\xi'})\right)
	\\\ge{}&{}d\left(f^{l}\left(x_\tau(\xi)\right), f^{l-r}\left(x_{\tau+1}(\xi')\right)\right) 
	-d\left(f^{s_{\tau}(\xi)+l}(z_{\xi}), f^{l}\left(x_\tau(\xi)\right)\right)
	\\&{}
	-d\left(f^{t_{\tau+1}(\xi')+(l-r)}(z_{\xi'}), f^{l-r}\left(x_{\tau+1}(\xi')\right)\right)
	\\\ge{}&{}4\gamma-\gamma-\gamma>\gamma.
	\end{align*}
	Moreover, we have 
	$$s_{\tau}(\xi)+l\le\sum_{k=1}^{\tau-1}(m+t_{k}(\xi')-1)+m\le (1+\delta)nm.$$
	So $z_\xi$ and $z_{\xi'}$ are $((1+\delta)nm,\gamma)$-separated.
\end{proof}

By Lemma \ref{lemsep}, for each $n$,
there is a $((1+\delta)nm,\gamma)$-separated set whose cardinality is $2^n$.
This yields that
$$h(f)\ge\limsup_{n\to\infty}\frac{\ln 2^n}{(1+\delta)nm}=\frac{\ln2}{(1+\delta)m}>0.$$

\subsection{Necessity}

In this subsection we prove the `only if' part of  Theorem \ref{thmunierg}.
Suppose that $(X,f)$ is a system with the approximate product property and positive topological entropy
$h(f)>0$. We shall show that such a system has more than one ergodic measures.
Compared with the results in \cite{Sunintent}, in what follows we do not assume that
the system is asymptotically entropy expansive.

By Proposition \ref{entropydense}, for each $k$, there is a compact invariant set
$\Lambda_k$ such that
\begin{equation}\label{entest}
h(\Lambda_k,f,\frac1k):=\limsup_{n\to\infty}\frac{\ln s(\Lambda_k, n, \frac1k)}{n}<\frac1k,
\end{equation}
where $s(\Lambda_k, n, \frac1k)$ denotes the maximal cardinality of 
$(n,\frac1k)$-separated subsets of $\Lambda_k$. 
Denote
$$\Gamma_k:=\bigcap_{j=1}^k \Lambda_j \text{ for each }k.$$
Then for each $k$, $\Gamma_k$ is also compact and $f$-invariant.

There are two cases to consider:
\begin{enumerate}
	\item Suppose that there is $k$ such that $\Gamma_k\cap\Lambda_{k+1}=\emptyset$.
	Then there are two distinct ergodic measures that are supported on $\Gamma_k$ and
	$\Lambda_{k+1}$, respectively.
	Then $(X,f)$ is not uniquely ergodic.
	\item Suppose that $\Gamma_k\cap\Lambda_{k+1}\ne\emptyset$ for all $k$.
	In this case we have a nonempty invariant compact set
	$$\Gamma:=\bigcap_{k=1}^\infty \Gamma_k=\bigcap_{k=1}^\infty \Lambda_k.$$
	By \eqref{entest}, we have
	$$h(\Gamma,f,\frac1k)\le h(\Lambda_k,f,\frac1k)<\frac1k\text{ for all }k.$$
	This implies that 
	$$h(\Gamma,f)=\lim_{k\to\infty}h(\Gamma,f,\frac1k)=0.$$ 
	and hence $\Gamma$ supports an ergodic
	measure of zero entropy. However, as $h(f)>0$, the system $(X,f)$ must have ergodic
	measures of positive entropy. This completes the proof.
\end{enumerate}

\section{Examples}\label{seceg}

\begin{example}\label{exintmap}
	According to \cite{MS}, there is a class of zero-entropy $C^\infty$ interval maps 
	such that each map in the class
	has periodic points of period $2^n$ for any $n\in\ZZ^+$ 
	and is chaotic in the sense
	of Li-Yorke. Theorem \ref{thmunierg} implies that these maps do not have 
	the approximate product property.
\end{example}

\begin{example}\label{exaper}
	In \cite{HK}, a minimal subshift is constructed
	as the first example of strictly ergodic system with positive
	topological entropy, which gives a negative answer to Parry's question. 
	In \cite{BC}, it is shown that 
	strictly ergodic homeomorphisms with positive topological entropies
	can be constructed on 
	every compact
	manifold whose dimension is at least 2 and that carries a strictly ergodic homeomorphism.
	By Theorem \ref{thmunierg}, these systems do not have the approximate product
	property.
\end{example}

\begin{example}
	\label{exnonmini}
	By \cite[Theorem 7.1]{Kwiet} and \cite[Example 9.6]{Sunintent}, there is 
	$X_1\subset\{0,1\}^{\NN}$ such that
	$$\frac1n\max\{|\{m\le k<m+n:w_k=1\}|:m\in\NN\}$$
	converges uniformly to zero for every $\{w_k\}_{k\in\NN}\in X_1$.
	Consider the subshift $\sigma$ on $X_1$.
	Then for every sequence $\sC$ in $X_1$, every $\delta_2,\ep>0$,
	there is $M\in\ZZ^+$ such that for every $n>M$,
	$\left(\sC,\{n\}^{\ZZ^+},\{1\}^{\ZZ^+}\right)$ is $(\delta_2,
	\ep)$-traced by the fixed point $\{0\}^{\NN}$. 
	The subshift $(X_1,\sigma)$ has the approximate product property and zero topological
	entropy. It is topologically mixing but not minimal.
	We remark that by Theorem \ref{thmsapp}, this subshift does not have the strict approximate
	product property.
	
	Such subshifts can be modified to obtain a non-transitive system with the approximate	product property. See \cite[Example  9.7]{Sunintent}.
\end{example}

\begin{example}\label{herman}
	In \cite{Herman}, Herman
	constructed a family of $C^\infty$ diffeomorphisms
	$\cF=\{F_\alpha:\alpha\in\mathbb{T}^1\}$ on $X=\mathbb{T}^1\times\mathrm{SL}(2,\mathbb{R})/\Gamma$, where
	$\mathbb{T}^1=[0,1]/\sim$ is the unit circle and
	$\Gamma$ is a cocompact discrete subgroup of $\mathrm{SL}(2,\mathbb{R})$. For each $\alpha\in\mathbb{T}^1$,
	$$F_\alpha(\theta,g\Gamma)=(R_\alpha(\theta),A_\theta(g\Gamma))$$ is a skew product, where $R_\alpha(\theta)=\theta+\alpha$
	is the rotation on $\mathbb{T}^1$, 
	$$A_\theta(g\Gamma)=\left(\begin{array}{r r}
	\cos 2\pi\theta & -\sin 2\pi\theta \\ \sin 2\pi\theta & \cos
	2\pi\theta
	\end{array} \right)
	\left(\begin{array}{r r}\lambda & 0 \cr 0 &
	1/\lambda\end{array}\right)g\Gamma\text{ for each }g\Gamma\in\mathrm{SL}(2,\mathbb{R})/\Gamma$$
	and $\lambda>1$ is a fixed real number.
	Herman showed in \cite{Herman} that 
	$h(F_\alpha)>0$ for every $\alpha\in\mathbb{T}^1$ and
	there is dense $G_\delta$ subset $W\in\mathbb{T}^1$ such that
	$F_\alpha$ is minimal for every $\alpha\in W$.
	
	By Proposition \ref{propminimal},  the approximate product
	property does not hold for $F_\al$ as long as  $\alpha\in
	W$.  We doubt if it holds for any element $F_\alpha$ in the family $\cF$.

\end{example}

\section{Periodic Points and Unique Ergodicity }
\subsection{Systems with Fixed Points}

We perceive that Example \ref{exnonmini} exhibits a more general phenomenon,
which may be regarded as a flaw that appears naturally with the way
we define the approximate product property, where we allow mistakes in
the tracing property \eqref{ndeltrace}.

\begin{proposition}\label{propfix}
	Let $(X,f)$ be a system with a fixed point $p\in X$. Then
	$(X,f)$ has the approximate product property and zero topological entropy 
	if and only if $(X,f)$ is uniquely
	ergodic, i.e. The Dirac measure on $\{p\}$ is the unique ergodic measure
	for $(X,f)$.
\end{proposition}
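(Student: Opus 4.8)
The plan is to derive the proposition from Theorem \ref{thmunierg} together with one elementary observation: a fixed point serves as a ``universal'' tracing point that permits trivial gaps, so the variable-gap issue never enters. For the `only if' direction, if $(X,f)$ has the approximate product property and zero topological entropy, then Theorem \ref{thmunierg} gives that $(X,f)$ is uniquely ergodic; since $p$ is fixed, $\delta_p\in\cm(X,f)$, so $\delta_p$ is the unique invariant, hence the unique ergodic, measure. For the `if' direction, assume $(X,f)$ is uniquely ergodic, so $\delta_p$ is its only invariant measure; then the variational principle immediately yields $h(f)=\sup_{\mu\in\cm(X,f)}h_\mu(f)=h_{\delta_p}(f)=0$.

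The substantive step is showing that unique ergodicity forces the approximate product property. First I would fix $\delta_1,\delta_2,\ep>0$ and pick a continuous $\varphi:X\to[0,1]$ with $\varphi(p)=0$ and $\varphi\ge\mathbf{1}_{X\setminus B(p,\ep)}$, for instance $\varphi(y)=\min\{1,\ep^{-1}d(y,p)\}$. Since $(X,f)$ is uniquely ergodic, the Birkhoff averages $\frac1n\sum_{j=0}^{n-1}\varphi\circ f^j$ converge to $\int\varphi\,d\delta_p=0$ uniformly on $X$, so there is $M>0$, which I take as $M(\delta_1,\delta_2,\ep)$ (it does not depend on $\delta_1$), such that for every $n>M$ and every $x\in X$,
\[
\left|\left\{j\in\ZZ_n:\ f^j(x)\notin B(p,\ep)\right\}\right|\ \le\ \sum_{j=0}^{n-1}\varphi\!\left(f^j(x)\right)\ <\ \delta_2\,n.
\]
Then, given $n>M$ and an arbitrary sequence $\sC=\{x_k\}_{k\in\ZZ^+}$ in $X$, I would take $z:=p$ and $\sg:=\{1\}^{\ZZ^+}$, so that $\max\sg=1\le 1+\delta_1 n$. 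Because $p$ is fixed, $f^{s_k+j}(p)=p$ for all $k,j$ (with $s_k=s_k(\{n\}^{\ZZ^+},\sg)$ as in \eqref{defsk}), so for each $k$ the set $\{j\in\ZZ_n: d(f^{s_k+j}(p),f^j(x_k))>\ep\}$ equals $\{j\in\ZZ_n: d(p,f^j(x_k))>\ep\}$, which is contained in $\{j\in\ZZ_n: f^j(x_k)\notin B(p,\ep)\}$ and hence has at most $\delta_2 n$ elements. Thus $(\sC,\{n\}^{\ZZ^+},\sg)$ is $(\delta_2,\ep)$-traced by $z$, establishing the approximate product property.

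I do not expect a serious obstacle. The only ingredient beyond Theorem \ref{thmunierg} and the variational principle is the classical equivalence between unique ergodicity and uniform convergence of Birkhoff averages of continuous functions (see \cite{Walters}), and checking that $p$ traces an arbitrary sequence is a one-line computation since $p$ is fixed. The point worth emphasizing --- and the reason this proposition is much easier than Theorem \ref{thmunierg} --- is exactly that a fixed point removes the variable-gap difficulty: the choice $\sg\equiv 1$ already meets the bound $\max\sg\le 1+\delta_1 n$, so nothing has to be controlled against $\delta_1 n$.
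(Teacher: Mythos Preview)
Your proposal is correct and follows essentially the same approach as the paper: both directions invoke Theorem~\ref{thmunierg} and the variational principle, and for the approximate product property you use the same bump function/uniform Birkhoff averages argument with the fixed point $p$ as the tracing point and constant gaps $\sg\equiv1$. The only cosmetic difference is that you give an explicit formula for $\varphi$, whereas the paper merely asserts its existence.
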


\begin{proof}
	The 'only if' part is a corollary of Theorem \ref{thmunierg}. Now we assume
	that $\delta_p$ is the unique ergodic measure for $(X,f)$. By the Variational
	Principle, we must have
	$h(f)=h_{\delta_p}(f)=0$. We need to show that $(X,f)$ has the approximate
	product property. In fact, every sequence can be traced by the orbit of the
	fixed point $p$. This is analogous to the situation in Example \ref{exnonmini}.
	
	Let $\ep>0$. There is a continuous function $\varphi:X\to\RR$
	such that
	\begin{align*}
	\varphi(x)=0&\text{ for }x=p;\\
	\varphi(x)=1&\text{ for every }x\notin B(p,\ep);\\
	0<\varphi(x)<1&\text{ otherwise. }
	\end{align*}
	As $(X,f)$ is uniquely ergodic, we have that
	$$\frac1n\sum_{j=0}^{n-1}\varphi(f^j(x))\to \frac1n\sum_{j=0}^{n-1}\varphi(f^j(p))
	=0\text{ uniformly}.$$
	Then for every $\delta_2>0$, there is $M\in\ZZ^+$ such that
	$$\frac1n\sum_{j=0}^{n-1}\varphi(f^j(x))<\delta_2\text{ for every $n>M$ and
		every $x\in X$}.$$
	This implies that
	$$\left|\left\{j\in\ZZ_n: d(f^j(x),p)>\ep\right\}\right|<\delta_2n.$$
	Then for every sequence $\sC$ in $X$, we have
	that $(\sC,\{n\}^{\ZZ^+},\{1\}^{\ZZ^+})$ can be
	$(\delta_2,\ep)$-traced by the fixed point $p$.
	Hence $(X,f)$ has the approximate product property.
\end{proof}

\begin{remark}
	The proof of Proposition \ref{propfix} in fact shows that $(X,f)$ has the almost specification property
	if it has a unique ergodic measure supported on a fixed point.
\end{remark}

Theorem \ref{thmper} can be verified as a corollary of Proposition \ref{propfix}
and the following fact, which holds generally for systems with the approximate product property.

\begin{proposition}
	Suppose that there is a positive integer $N$ such that $(X,f^N)$ is a system
	with the approximate product property. Then so is $(X,f)$.
\end{proposition}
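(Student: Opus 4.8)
The statement to prove is: if $(X,f^N)$ has the approximate product property for some positive integer $N$, then so does $(X,f)$. The plan is to transfer a tracing point for $f^N$ directly into a tracing point for $f$, exploiting the fact that powers only speed up time by a factor of $N$. First I would fix $\delta_1,\delta_2,\ep>0$ and, using uniform continuity of $f$, choose $\ep'\in(0,\ep)$ so small that $d(a,b)<\ep'$ implies $d(f^i(a),f^i(b))<\ep$ for all $0\le i<N$. Then I would apply the approximate product property of $(X,f^N)$ with parameters $\delta_1$, $\delta_2/N$ (or some comparable adjustment), and $\ep'$ to obtain a threshold $M_0$. Given a sequence $\sC=\{x_k\}$ in $X$ for which we want an $f$-tracing point, the key step will be to pass to the "sampled" sequence: roughly, for $n>M$ (with $M$ chosen as $N M_0$ plus a correction) write $n = Nq + r$ and feed the sequence $\sC'$ consisting of the points $x_k$ together with an appropriate reindexing into the $f^N$-version of the property, producing a gap sequence $\sg'$ and a point $z$ with $(\sC',\{q\}^{\ZZ^+},\sg')$ being $(\delta_2/N,\ep')$-traced by $z$ under $f^N$.

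The heart of the argument is to check that this same $z$ $(\delta_2,\ep)$-traces $(\sC,\{n\}^{\ZZ^+},\sg)$ under $f$, where $\sg$ is obtained from $\sg'$ by multiplying gaps by $N$ (and adjusting by the $-1$ conventions in \eqref{defsk}). The point is that if $d(f^{Nj}(f^{s'_k}(z)), f^{Nj}(x_k))$ is small for a set of $j\in\ZZ_q$ of density $\ge 1-\delta_2/N$, then by the choice of $\ep'$ the iterates $f^{Nj+i}(f^{s'_k}(z))$ stay within $\ep$ of $f^{Nj+i}(x_k)$ for all $0\le i<N$; summing over such $j$ gives a set of indices in $\ZZ_{Nq}$, hence (after handling the remainder $r<N$) in $\ZZ_n$, of density $\ge 1-\delta_2$ along which $f$-tracing holds. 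I also need to verify the gap bound: if $\max\sg'\le 1+\delta_1 q$, then the corresponding $f$-gaps are of size at most $N(1+\delta_1 q)\approx 1+\delta_1 n$ up to the additive constants, so choosing $M$ large (and possibly shrinking $\delta_1$ by a bounded factor absorbed into $M$) makes $\max\sg\le 1+\delta_1 n$ hold. Care is needed because the relation between $s_k(\sss,\sg)$ for $f$ and $s_k(\sss',\sg')$ for $f^N$ involves the $-1$ in \eqref{defsk}: with equal-length blocks of length $n=Nq$ under $f$ versus length $q$ under $f^N$, one has $s_k^{(f)} \approx N\cdot s_k^{(f^N)}$ with an error that is linear in $k$ but bounded in terms of the gaps, and this error must be shown to be swallowed by enlarging gaps slightly.

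The main obstacle I anticipate is precisely this bookkeeping with the variable gaps and the off-by-$N$ and off-by-one discrepancies in the definition \eqref{defsk} of the $s_k$: the "hat points" $f^{s_k}(z)$ for $f$ and $f^{s'_k}(z)$ for $f^N$ need not coincide exactly, and I must insert the discrepancy into the gap sequence $\sg$ rather than letting it corrupt the tracing estimate. Concretely, I would define the $f$-gaps so that $s_{k+1}^{(f)}$ lands exactly on $N$ times the corresponding $f^N$-time, which forces $t_k^{(f)} = N t_k^{(f^N)} - (N-1)$ or a similar formula; one then checks this is a positive integer (using $t_k^{(f^N)}\ge 1$, which holds for $N\ge 1$) and that $\max\sg^{(f)}\le N\max\sg^{(f^N)}\le N(1+\delta_1 q)\le 1+\delta_1 n$ for $n$ large. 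A secondary nuisance is the remainder $r$ when $N\nmid n$: since $r<N$ is bounded while the block length $n\to\infty$, the mismatched tail contributes at most $N$ bad indices, which is $\le(\delta_2/2)n$ for $n$ large, so it is harmlessly absorbed. Once these indices are accounted for, the density estimate \eqref{ndeltrace} for $f$ follows from the one for $f^N$ by the uniform-continuity step, completing the proof. A cleaner alternative, if the constants cooperate, is to treat each length-$n$ block as $q$ consecutive length-$N$ mini-blocks and apply the $f^N$-property with target sequence listing all the $f$-iterates $x_k, f(x_k),\dots$ explicitly; I would try this reformulation first, as it may sidestep most of the index arithmetic.
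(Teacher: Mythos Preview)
Your approach is the same as the paper's: use uniform continuity of $f,\dots,f^{N-1}$ to promote $\ep'$-tracing under $f^N$ to $\ep$-tracing under $f$, feed the \emph{same} sequence $\sC$ into the $f^N$-property, and then rescale the gaps. The paper also halves both $\delta_1$ and $\delta_2$ (your ``shrinking by a bounded factor'') and handles the remainder by enlarging the threshold; the density computation you sketch is exactly what is used.

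There is one concrete bookkeeping slip in your plan. With $n=Nq+r$, $0\le r<N$, you take the $f^N$-block length to be $q=\lfloor n/N\rfloor$ and then force $s_k^{(f)}=N s_k^{(f^N)}$. This yields
\[
t_k^{(f)}=N\bigl(t_k^{(f^N)}-1\bigr)-r+1,
\]
which is $1-r\le 0$ whenever $r\ge 1$ and $t_k^{(f^N)}=1$; but the approximate product property only guarantees $t_k^{(f^N)}\ge 1$, so you cannot rule this out. Your formula $t_k^{(f)}=N t_k^{(f^N)}-(N-1)$ is correct only for $r=0$. The paper avoids this by taking the $f^N$-block length to be $m+1$ (i.e.\ $\lceil n/N\rceil$ rather than $\lfloor n/N\rfloor$), so that the $f$-traced segment has length $(m+1)N\ge n$; the excess $N-l$ is then \emph{added} to each gap, keeping $t_k^{(f)}=1+N(t_k^{(f^N)}-1)+(N-l)\ge 1$ automatically, and the bound $\max\sg^{(f)}\le 1+\delta_1 n$ follows once $m>1+2/\delta_1$. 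With this one-line fix (round up instead of down) your argument goes through exactly as the paper's does.
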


\begin{proof}
	Suppose that we are given $\delta_1,\delta_2,\ep>0$. By continuity, there
	is $\gamma>0$ such that
	\begin{equation}\label{dfncont}
	d(f^j(x),f^j(y))<\ep\text{ for every $j\in\ZZ_N$, whenever }d(x,y)<\gamma.
	\end{equation}
	As $(X,f^N)$ has the approximate product property, there is $M=M(\frac{\delta_1}2,
	\frac{\delta_2}2,\gamma)$
	as  in Definition \ref{defapp} such that for every $n> M$ and every sequence
	$\sC$ in $X$, there are an 
	sequence $\sg=\{t_k\}_{k\in\ZZ^+}$ with $\max\sg\le1+\frac{\delta_1n}2$
	and $z\in X$ such that $\left(\sC,\{n\}^{\ZZ^+},\sg\right)$ is $(\frac{\delta_2}2,
	\ep,\gamma)$-traced by $z$ under $f^N$. 
	By \eqref{dfncont}, this implies that $\left(\sC,\{nN\}^{\ZZ^+},\{1+N(t_k-1)\}_{k\in\ZZ^+}\right)$ 
	is $(\frac{\delta_2}2,
	\ep)$-traced by $z$ under $f$. 
	
	Let 
	\begin{equation}\label{estoft}
	T>\max\left\{M,1+\frac2{\delta_1},2\right\}.
	\end{equation}
	For every $n>TN$, we can write $n=mN+l$ such that
	$m\ge T$ and $0\le l<N$.
	Suppose that we have
	$\sg=\{t_k\}_{k\in\ZZ^+}$ with $\max\sg\le1+\frac{\delta_1(m+1)}2$
	and 
	$\left(\sC,\{(m+1)N\}^{\ZZ^+},\{1+N(t_k-1)\}_{k\in\ZZ^+}\right)$ 
	is $(\frac{\delta_2}2,
	\ep)$-traced by $z$ under $f$.
	By \eqref{estoft}, we have
	$$1+N(t_k-1)+(N-l)\le1+\frac{\delta_1(m+1)N}2+N<1+\delta_1n$$
	for each $k$ and
	$$\frac{\delta_2}{2}(m+1)N<\delta_2 mN\le\delta_2n.$$
	These bounds guarantee that 
	the gap $\sg'=\{1+N(t_k-1)+(N-l)\}_{k\in\ZZ^+}$ satisfies
	$\max\sg'\le1+\delta_1n$ and
	$\left(\sC,\{n\}^{\ZZ^+},\sg'\right)$ 
	is $(\delta_2,
	\ep)$-traced by $z$ under $f$.
	Hence $(X,f)$ also has the approximate product property.
\end{proof}

\subsection{Interval Maps}

Let $I$ be a closed interval. We know that every continuous interval map 
must have a fixed point and hence is not minimal. By Theorem \ref{thmsapp}, 
there is no continuous
interval map that has the strict approximate product property and zero topological
entropy.

Now we would like to prove Theorem \ref{zentappinterval}.
Suppose that $f:I\to I$ is a continuous map on the interval $I$.
As $f$ has a fixed point in $I$, the `if' part of Theorem
\ref{zentappinterval} is a corollary of Proposition \ref{propfix}.

Suppose that $(I,f)$ has the approximate product property and zero topological entropy.
By Theorem \ref{thmper}, $(I,f)$  has a unique ergodic measure which is supported
on a unique
fixed point $p\in I$ and  $(I,f)$ has no other periodic points. 
The following fact completes the proof of Theorem \ref{zentappinterval}.

\begin{proposition}\label{propattract}
	Let $p$ be a fixed point of a continuous interval map $(I,f)$. Suppose that $(I,f)$
	has no other periodic points. Then $p$ is attracting.
\end{proposition}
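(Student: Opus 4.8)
The plan is to argue by contradiction: if $p$ is not attracting, then there is a point whose forward orbit stays bounded away from $p$, and the absence of other periodic points together with the intermediate value theorem will force the existence of a periodic point of period $2$ (or a second fixed point), a contradiction. First I would set up notation: write $I=[a,b]$, and suppose $p$ is not attracting, so there exist $\gamma>0$ and points arbitrarily close to $p$ whose orbits eventually leave $B(p,\gamma)$; in fact, since $f$ is an interval map, it is cleaner to find a single point $x_0$ with $\limsup_{n\to\infty} d(f^n(x_0),p)\ge\gamma$, i.e. the orbit of $x_0$ does not converge to $p$.

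The key structural input is the classical theory of continuous interval maps. Since $(I,f)$ has no periodic point other than the fixed point $p$, by Sharkovskii's theorem it in particular has no point of period $2$. A standard fact (see e.g. the theory surrounding the Sharkovskii order, or a direct IVT argument) is that if a continuous interval map has a fixed point $p$ but no periodic orbit of period $2$, then every point's orbit is eventually monotone and in fact converges to a fixed point; combined with the hypothesis that $p$ is the \emph{only} fixed point, this forces $f^n(x)\to p$ for every $x\in I$. The step I would carry out in detail is the elementary one: assuming no $2$-periodic points, show that the sign of $f(x)-x$ is constant on each connected component of $I\setminus\{p\}$ (by IVT, a change of sign of $f(x)-x$ would produce another fixed point), hence on each side of $p$ the orbit is monotone and bounded by $p$, so it converges to a fixed point, which must be $p$.

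The main obstacle, and the place where the hypothesis "no other periodic points" (not merely "no other fixed points") is essential, is ruling out the oscillatory behavior where the orbit of $x_0$ alternates across $p$: a point to the left of $p$ could map to the right of $p$ and back. To handle this I would look at $g:=f^2$. If $f$ has no periodic points of period $1$ or $2$ other than $p$, then $g$ has $p$ as its only fixed point, and $g$ maps each of the two half-intervals $[a,p]$ and $[p,b]$ into itself (this uses that $f(p)=p$ and $f$ is continuous, so $f$ cannot swap the two halves across $p$ without creating a $2$-periodic point — more precisely, if $f([a,p])\not\subset[a,p]$ one produces a point with $f(x)>p>x$, and chasing this under $f$ yields either a fixed point or a $2$-periodic point in the interior). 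Applying the monotonicity argument of the previous paragraph to $g$ on each half-interval gives $g^n(x)=f^{2n}(x)\to p$; since $f$ is continuous and $f(p)=p$, the odd iterates converge to $p$ as well, so $f^n(x)\to p$ for all $x$, contradicting the existence of $x_0$. Hence $p$ is attracting.
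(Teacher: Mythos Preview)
Your reduction to $g=f^2$ contains a genuine gap: the claim that $g$ maps each of the half-intervals $[a,p]$ and $[p,b]$ into itself is false, even under the full hypothesis that $f$ has no periodic point other than $p$. Take $I=[-1,1]$, $p=0$, and define $f(x)=-x$ for $x\le 0$ and $f(x)=x/2$ for $x\ge 0$. This map is continuous, has $0$ as its unique fixed point, and has no other periodic points whatsoever (any $x<0$ is sent to $-x>0$ and thereafter the orbit stays positive and decreases to $0$; any $x>0$ has orbit $x/2^n\to 0$). Yet for $x\in[-1,0]$ one computes $f^2(x)=-x/2$, so $g([-1,0])=[0,\tfrac12]\not\subset[-1,0]$. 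Thus passing to $g=f^2$ does not eliminate oscillation across $p$, and the monotonicity-on-each-half argument you intend to apply to $g$ fails for exactly the same reason it failed for $f$. The parenthetical justification you give---that a point with $f(x)>p>x$ can be ``chased under $f$'' to produce a fixed or period-$2$ point---is likewise refuted by this example.

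The paper's proof avoids this trap by establishing a stronger lemma: if $x<p$ then $f^n(x)>x$ for \emph{every} $n\in\ZZ^+$ (and symmetrically for $x>p$), which genuinely requires the absence of periodic points of \emph{all} periods, not just period $2$. With this in hand one shows that the iterates lying to the left of $p$ form a strictly increasing subsequence and those lying to the right a strictly decreasing one; the two subsequential limits $q_1\le p\le q_2$ must then satisfy $f(\{q_1,q_2\})\subset\{q_1,q_2\}$, and one rules out $q_1\ne q_2$ by the absence of period-$2$ points together with a short continuity argument for the degenerate case where exactly one of them equals $p$. Your proposal would be repaired by proving and using this ``$f^n(x)>x$ for all $n$'' lemma in place of the (false) half-interval invariance of $f^2$.
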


We remark that in general, as exhibited in Example \ref{exnonmini}, the unique fixed point
of a uniquely ergodic system is not necessarily attracting.
We shall give a proof of Proposition \ref{propattract} for completeness.
It is well-known that a continuous map $f:\RR\to\RR$ has a fixed point in
an interval $J$ if 
$f(J)\supset J$. 
We shall use this fact and the Darboux property that
$f([a,b])\supset[f(a),f(b)]$ constantly without reference.

Suppose that the assumptions of Proposition \ref{propattract} holds. Let
$x\in I$ and $x\ne p$. 
To prove
Proposition \ref{propattract} we need to show that $f^n(x)\to p$
as $n\to\infty$.

\begin{lemma}\label{fnxgex}
	If $x<p$ then 
	$f^n(x)>x$ for every $n\in\ZZ^+$.
	If $x>p$ then $f^n(x)<x$ for every $n\in\ZZ^+$.
\end{lemma}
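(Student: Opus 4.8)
The plan is to deduce Lemma \ref{fnxgex} from a general "sign lemma'' for interval maps with a single fixed point, applied not to $f$ itself but to every iterate $f^n$. The point is that the hypothesis of Proposition \ref{propattract} — that $p$ is the \emph{only} periodic point — is inherited by all iterates, which is exactly what makes the elementary sign argument available at level $n$.

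\emph{Step 1 (sign lemma).} First I would prove: if $g\colon I\to I$ is continuous and $p$ is its only fixed point, then $g(t)>t$ for every $t\in I$ with $t<p$, and $g(t)<t$ for every $t\in I$ with $t>p$. To prove the first assertion, suppose for contradiction that $g(t_0)\le t_0$ for some $t_0<p$; since $t_0\ne p$ we get $g(t_0)<t_0$. Let $c$ be the left endpoint of $I$. The set $S=\{t\in[c,t_0]:g(t)\ge t\}$ is closed (preimage of $[0,\infty)$ under the continuous map $t\mapsto g(t)-t$, intersected with $[c,t_0]$) and nonempty, since $g(c)\in I$ forces $g(c)\ge c$, i.e. $c\in S$. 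Hence $b:=\sup S$ is attained, so $g(b)\ge b$. Because $g(t_0)<t_0$ we have $t_0\notin S$, so $b<t_0$ and $(b,t_0]$ is nonempty; every $t$ in it lies outside $S$, so $g(t)<t$, and letting $t\downarrow b$ gives $g(b)\le b$. Therefore $g(b)=b$, so $b=p$; but $b\le t_0<p$, a contradiction. The statement for $t>p$ follows by the symmetric argument, taking the infimum of $\{t\in[t_0,d]:g(t)\le t\}$ with $d$ the right endpoint of $I$.

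\emph{Step 2 (iterates and conclusion).} Under the assumptions of Proposition \ref{propattract}, the only periodic point of $(I,f)$ is $p$. Fix $n\in\ZZ^+$. Any fixed point of $f^n$ is a periodic point of $f$ (of period dividing $n$), hence equals $p$; so $f^n\colon I\to I$ is a continuous interval map whose only fixed point is $p$. Applying Step 1 to $g=f^n$ gives $f^n(t)>t$ for all $t\in I$ with $t<p$, and $f^n(t)<t$ for all $t\in I$ with $t>p$. Specializing to $t=x$: if $x<p$ then $f^n(x)>x$ for every $n\in\ZZ^+$, and if $x>p$ then $f^n(x)<x$ for every $n\in\ZZ^+$, which is precisely the assertion of Lemma \ref{fnxgex}.

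The substantive content of the argument is the observation that the unique-fixed-point property survives passage to $f^n$, so that no loop-of-intervals or Sharkovskii-type machinery is needed; the only place that requires care is the boundary behaviour in Step 1, namely noting that the endpoint $c$ automatically lies in $S$ (so $\sup S$ is well defined) and that $\sup S<t_0$ strictly (so the one-sided limit $t\downarrow b$ makes sense). I expect that to be the single delicate point; everything else is routine.
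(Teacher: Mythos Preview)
Your proof is correct and takes a cleaner route than the paper's. The paper argues by contradiction at a fixed $x$ and $n$: assuming $x<p$ and $f^n(x)<x$, it splits into two cases according to whether the sequence $\{f^{mn}(x)\}_{m\ge 1}$ ever exceeds $x$. If it does, the Darboux property yields an interval $J\subset(-\infty,x]$ with $f^{kn}(J)\supset J$ for $k=n$ or $k=mn$, hence a periodic point below $p$; if it never does, the infimum $a$ of that sequence satisfies $f^n(a)\ge a$ while $f^n(x)<x$, and the Intermediate Value Theorem gives a fixed point of $f^n$ in $[a,x]$. Your approach sidesteps this case analysis entirely by noting at the outset that ``no periodic points except $p$'' forces every iterate $f^n$ to have $p$ as its \emph{unique} fixed point, after which the sign lemma is a single IVT argument. (Indeed your Step~1 can be shortened further: from $g(c)-c\ge 0$ and $g(t_0)-t_0<0$ the IVT already gives a fixed point of $g$ in $[c,t_0]\subset[c,p)$, so the $\sup S$ manoeuvre is not needed.) What the paper's argument buys is an explicit construction of the offending periodic point out of the forward orbit of $x$; what yours buys is uniformity in $t$ and brevity.
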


\begin{proof}
	We give the proof for the case that $x<p$. The proof of the other case is analogous.
	As $x$ is not a periodic point,
	we have $f^n(x)\ne x$ for all $n$.
	Suppose that there is $n\in\ZZ^+$ such that $f^n(x)<x$. 
	We shall show that this leads to the existence of another periodic point of $f$.
	
	There are two cases
	to consider:
	\begin{enumerate}
		\item Suppose that there is $m\in\ZZ^+$ such that 
		$f^{mn}(x)<x<f^{(m+1)n}(x)$.
		
		If $f^{mn}(x)<f^n(x)$, then we have
		$$f^{mn}([f^n(x),x])\supset [f^{mn}(x),f^{(m+1)n}(x)]\supset[f^n(x),x],$$
		hence there is an $mn$-periodic point in $[f^n(x),x]$ that is different from
		$p$.
		Otherwise it holds that $f^{n}(x)\le f^{mn}(x)$. In this case
		we have
		$$f^{n}([f^{mn}(x),x])\supset [f^{n}(x),f^{(m+1)n}(x)]\supset[f^{mn}(x),x],$$
		hence there is an $n$-periodic point in
		$[f^{mn}(x),x]$ that is different from $p$.
		
		\item Suppose that $f^{mn}(x)<x$ for every $m\in\ZZ^+$.
		Let $$a:=\inf\{f^{mn}(x):m\in\ZZ^+\}\in I.$$
		Then we must have $f^n(a)\ge a$. But $f^n(x)<x$. By the Intermediate Value
		Theorem, there is a fixed point of $f$ in the interval $[a,x]$ that is
		different from $p$.

	\end{enumerate}
\end{proof}

\begin{proof}[Proof of Proposition \ref{propattract}]
	Let $x\in I$ and $x\ne p$. 
	If there is $f^N(x)=p$ for some $N$ then $f^n(x)=p$ for all $n>N$.
	
	Suppose that $f^n(x)\ne p$ for all $n$. There is a decomposition $\NN=\cN_1\cup \cN_2$ such
	that
	$$\begin{cases}f^n(x)<p, &\text{ for }n\in \cN_1;\\
	f^n(x)>p, &\text{ for }n\in \cN_2.
	\end{cases}$$
	The proof splits into two cases:
	\begin{enumerate}
		\item Suppose that $\cN_2$ is finite. Then there is $N$ such that $n\in\cN_1$
		for all $n\ge N$. 
		By Lemma \ref{fnxgex}, $\{f^n(x)\}_{n=N}^\infty$ is an increasing sequence
		with an upper bound $p$. Hence it converges to some $q\in I$. Then
		$$f(q)=f\left(\lim_{n\to\infty}f^n(x)\right)=\lim_{n\to\infty}f^{n+1}(x)=q.$$
		As $p$ is the unique fixed point, we must have $q=p$ and hence $f^n(x)\to p$
		as $n\to\infty$. Analogous argument works for case that $\cN_1$ is finite.
		\item Suppose that both $\cN_1$ and $\cN_2$ are infinite. 
		Arrange the elements of $\cN_1$ in a strictly increasing sequence $\{n_k\}_{k=1}^\infty$
		and the elements of $\cN_2$ in a strictly increasing sequence $\{n_k'\}_{k=1}^\infty$.
		By Lemma \ref{fnxgex},
		$\{f^{n_k}(x)\}_{k=1}^\infty$ is an increasing sequence with an upper bound
		$p$ and $\{f^{n_k'}(x)\}_{k=1}^\infty$ is an decreasing sequence with a lower bound
		$p$. Then
		there are $q_1,q_2\in I$ such that
		$$\lim_{n_k\to\infty} f^{n_k}(x)=q_1\text{ and }\lim_{n_k'\to\infty} f^{n_k'}(x)=q_2.$$
		We see that $q_1,q_2$ are exactly all the subsequential limits of the sequence
		$\{f^n(x)\}_{n=1}^\infty$.
		So we must have $f(\{q_1,q_2\})\subset\{q_1,q_2\}$.
		There are three sub-cases:
		\begin{enumerate}
			\item If $q_1=q_2=p$ then we have $f^n(x)\to p$ as $n\to\infty$.
			\item If $q_1\ne p$ and $q_2\ne p$. 
			As $p$ is the unique fixed point, we
			have $f(q_1)\ne q_1$ and $f(q_2)\ne q_2$.
			This yields that $f(q_1)=q_2$ and $f(q_2)=q_1$.
			Then $q_1,q_2$ are 2-periodic points, which is a contraction.
			\item Without loss of generality, we may assume that $q_1\ne p$ and $q_2=p$.
			Then $f(q_1)=q_2=p$. As $f$ is continuous, there is $\delta>0$ such that
			\begin{equation}\label{fypclose}
			|f(y)-p|<|q_1-p|\text{ for every }y\in B(p,\delta).
			\end{equation}
			As $f^{n_k'}(x)\to p$, there is $N\in\cN_2$ such that $p<f^N(x)<p+\delta$.
			By \eqref{fypclose}, we have $|f^{N+1}(x)-p|<|q_1-p|$. But
			$f^{n_k}(x)\le q_1<p$ for every $n_k\in\cN_1$. This implies that
			$N+1\in\cN_2$ and hence by
			Lemma \ref{fnxgex} we have $$p< f^{N+1}(x)<f^N(x)<p+\delta.$$ 
			Analogous argument
			shows that
			$$p< f^{N+2}(x)<f^{N+1}(x)<p+\delta.$$ 
			Hence by induction we have
			$n\in\cN_2$ for every $n\ge N$, which contradicts with the assumption that
			$\cN_1$ is infinite.
		\end{enumerate}
		
	\end{enumerate}
\end{proof}

\section*{Acknowledgments}
The author is supported by National Natural Science Foundation of China (No. 11571387)
and CUFE Young Elite Teacher Project (No. QYP1902). The author would like to
thank Xueting Tian, Jian Li and the anonymous referees for helpful comments.

\end{document}